\newtheorem{theorem}{Theorem}
\newtheorem{lemma}[theorem]{Lemma}
\newtheorem{proposition}[theorem]{Proposition}
\newtheorem{corollary}[theorem]{Corollary}
\newtheorem{question}{Question}
\newtheorem{conjecture}{Conjecture}
\newcommand{\Href}[1]{\hyperref[#1]{\Cref{#1}}}
\renewcommand{\href}[1]{\hyperref[#1]{\cref{#1}}}
\renewcommand{\eqref}[1]{\hyperref[#1]{(\ref{#1})}}
\def\emptyset{\varnothing}
\def\dots{\ldots}
\def\leq{\leqslant}
\def\geq{\geqslant}
\def\deg{{\rm deg}}
\def\LO{{\rm LO}} 
\def\LEO{{\rm LO(E)}} 
\def\LOE{{\rm LE(O)}} 
\def\LEE{{\rm LE(E)}} 
\def\LOO{{\rm LO(O)}} 
\def\RO{{\rm RO}} 
\def\RE{{\rm RE}}
\def\REO{{\rm RO(E)}}
\def\REE{{\rm RE(E)}}
\def\ROE{{\rm RE(O)}}
\def\ROO{{\rm RO(O)}}
\def\LE{{\rm LE}} 
\def\R{{\rm R}} 
\def\L{{\rm L}}
\def\SS{{\rm S}(u,v)}
\def\S{{\rm S}(u)}
\def\E{{\rm E}}
\definecolor{VeryLightBlue}{rgb}{0.9,0.9,1}
\definecolor{LightBlue}{rgb}{0.8,0.8,1}
\definecolor{MidBlue}{rgb}{0.5,0.5,1}
\definecolor{DarkBlue}{rgb}{0,0,0.6}
\definecolor{Blue}{rgb}{0,0,1}
\definecolor{Gold}{rgb}{1,0.843,0}
\definecolor{LightGreen}{rgb}{0.88,1,0.88}
\definecolor{MidGreen}{rgb}{0.6,1,0.6}
\definecolor{DarkGreen}{rgb}{0,0.6,0}
\definecolor{VeryLightYellow}{rgb}{1,1,0.9}
\definecolor{LightYellow}{rgb}{1,1,0.6}
\definecolor{MidYellow}{rgb}{1,1,0.5}
\definecolor{DarkYellow}{rgb}{1,1,0.01}
\definecolor{DarkPurple}{rgb}{.6,0,1}
\definecolor{Red}{rgb}{1,0,0}
\definecolor{VeryLightRed}{rgb}{1,0.9,0.9}
\definecolor{LightRed}{rgb}{1,0.8,0.8}
\definecolor{MidRed}{rgb}{1,0.55,0.55}
\title{An Inductive Approach to Strongly Antimagic \\
Labelings of Graphs}
\author
{Daphne Der-Fen Liu and Vicente Lossada\\ 
Department of Mathematics\\
California State University Los Angeles\\
{\small dliu@calstatela.edu; vicente.lossada@gmail.com}
}
\begin{document}

\baselineskip=14pt

\maketitle

\begin{abstract}
    An antimagic labeling for a graph $G$ with $m$ edges is a bijection $f: E(G) \to \{1, 2, \dots, m\}$ so that $\phi_f(u) \neq \phi_f(v)$ holds for any pair of distinct vertices $u, v \in V(G)$,  where $\phi_f(x) = \sum_{x \in e} f(e)$. A strongly antimagic labeling is an antimagic labeling with an additional condition: For any $u, v \in V(G)$, if  $\deg(u) > \deg(v)$,  then $\phi_f(u) > \phi_f(v)$. A graph $G$ is strongly antimagic if it admits a strongly antimagic labeling. We  present inductive properties of strongly antimagic labelings of graphs. This approach leads to simplified proofs that spiders and double spiders are strongly antimagic, previously shown by Shang \cite{Shang} and Huang \cite{Huang}, and by Chang, Chin, Li and Pan \cite{strongly}, respectively. We fix a subtle error in \cite{strongly}.  Further, we prove certain level-wise regular trees, cycle spiders and cycle double spiders are all  strongly antimagic. 
\end{abstract}

\section{Introduction}

For a positive integer $n$, denote $[n]=\{1, 2, \dots, n\}$. An {\it antimagic labeling} of a graph $G$  with $m$ edges is a bijection $f: E(G) \to [m]$ such that for every pair of distinct  vertices $u, v \in V(G)$,  $\phi_f(u) \neq \phi_f(v)$, where $\phi_f(x)$ is the sum of $f(e)$ over all edges $e$ incident to $x$. That is,  $\phi_f(x) = \sum_{x \in e}  f(e)$. 
When $f$ is clear from context, we simply denote $\phi_f(x)$ by $\phi(x)$. If $G$ admits an antimagic labeling, then $G$ is an {\it antimagic graph}. 
Given a labeling $f$ and a vertex $v$, we shall refer to the quantity $\phi_{f}(v)$ as the $\phi$\emph{-value} (or vertex-sum)  of $v$. We shall say the labeling $f$ of $G$ satisfies the \emph{antimagic property} for $G$ if the $\phi$-values of the vertices are all pairwise distinct.

The notion of antimagic labeling was introduced by Hartsfield and Ringel  \cite{Ringel}, who proposed the following conjecture:

\begin{conjecture}
\label{conj}
{\rm \cite{Ringel}} 
Every connected graph except $P_2$ is antimagic. 
\end{conjecture}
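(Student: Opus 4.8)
The plan is to attack \Cref{conj} by splitting the connected graphs into two regimes by edge density and treating each with different machinery; I should say at the outset, though, that this is the central open problem of the area and no complete proof is known, so what follows is an outline of the natural lines of attack rather than a route I expect to close.

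For graphs that are sufficiently dense, I would reach for probabilistic or algebraic tools. Assign the labels $\{1,\dots,m\}$ to the edges at random; for a fixed pair $u,v$ the probability that $\phi_f(u)=\phi_f(v)$ is small once at least one of $\deg(u),\deg(v)$ is large, and a union bound together with a local correction step (swapping two labels to break a surviving collision) should eliminate all collisions when the minimum degree grows like $\log n$. For the highly structured subcase of regular graphs one can instead fix an orientation coming from an Eulerian or flow decomposition and reduce the problem to separating the resulting signed edge-sums; this is the route by which regular graphs have already been settled in the literature, and it fits the inductive spirit of the present paper.

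For sparse graphs, and trees especially, the constructive, level-by-level induction developed in this paper is the right engine. The idea is to root the graph, process vertices from the deepest level upward, and greedily assign the largest unused labels to edges meeting the vertices of highest degree, so that the monotonicity $\deg(u)>\deg(v)\Rightarrow\phi(u)>\phi(v)$ is preserved automatically; residual ties among vertices of equal degree are then broken by tracking exactly which labels remain at each step and rerouting a single label along a path whenever two $\phi$-values coincide.

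The hard part — and precisely where the conjecture resists proof — is the intermediate-density regime, where the graph is neither dense enough for concentration to force separation nor tree-like enough for a clean inductive ordering, compounded by the difficulty of simultaneously separating the (possibly many) vertices that share one common degree. Any complete argument must unify the global counting that controls dense graphs with the local, edge-rerouting control that works on trees, and bridging these two pictures is the main obstacle.
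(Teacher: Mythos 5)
You are right, and this is the essential point: the statement you were given is not a result proved in the paper at all---it is the Hartsfield--Ringel conjecture, which the paper quotes (with citation) as an open problem and then attacks only in special cases (spiders, double spiders, certain level-wise regular trees, cycle spiders, cycle double spiders, all via the inductive machinery of \Href{big-one}). There is consequently no proof in the paper to compare your attempt against, and your submission, by its own honest admission, is a survey of attack strategies rather than a proof. Judged as a proof it has a gap by construction; judged as an assessment of the problem's status it is accurate.

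Two cautions on the heuristics themselves, so they are not mistaken for near-complete reductions. First, your sketch of the dense regime (random labeling, union bound, then ``swapping two labels to break a surviving collision'') is much looser than the actual argument of Alon, Kaplan, Lev, Roditty, and Yuster: the collision events $\phi_f(u)=\phi_f(v)$ are far from independent, and a single swap can create new collisions elsewhere, so the published proof relies on a careful partition of the label set and an iterative correction scheme rather than naive repair. Second, your greedy level-by-level scheme for trees founders exactly on the issue this paper is organized around: separating the $\phi$-values of the many vertices sharing one common degree. That is why the paper works with the \emph{strongly} antimagic property and the vertex ordering \eqref{1}---the strengthened invariant is what makes the induction in \Href{big-one} close---and even so it only reaches restricted tree families, leaving \Href{q1} open. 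So the conjecture stands; your placement of the obstruction in the intermediate regime and in equal-degree separation is correct, but neither of your two regimes is actually closed by the outline as written.
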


\noindent
In recent years, numerous researchers have investigated \Href{conj}. As a result of these efforts, many families of graphs are known to be antimagic. 
Alon, Kaplan, Lev, Roditty, and Yuster \cite{dense} proved that dense  graphs are antimagic. More precisely, the authors showed  that graphs of order $n$ and minimum degree  $\delta(G) \geq c \log n$ for some constant $c$, or with maximum degree $\Delta(G) \geq n-2$, are  antimagic. Other  families of graphs known to be antimagic include  regular graphs \cite{regular2, regular, regular bipartite}, trees with at most one vertex of degree two \cite{partition, trees}, subdivisions of trees \cite{trees}, and caterpillars \cite{caterpillar1,  caterpillars, caterpillar all}.

In \cite{Huang}, Huang introduced the notion of strongly antimagic labeling. A {\it strongly antimagic labeling} of $G$ is an antimagic labeling $f$ such that $\phi_f(u) > \phi_f(v)$ holds  whenever $\deg(u) > \deg(v)$.  
Considering strongly antimagic labelings can be beneficial for their use in solving other antimagic labeling problems. For example, for an integer $k$, a  {\it $k$-shifted antimagic labeling} of $G$ is  a bijection $f: E(G) \to [k+1, k+m]$ so that $f$ satisfies the antimagic property for $G$, where $[a,b]$ denotes $\{a, a+1, \dots, b\}$. If such a labeling exists then $G$ is called  {\it $k$-shifted antimagic}. Chang, Chen, Li and Pan \cite{shifted} studied the values of $k$ for a given graph $G$ to be $k$-shifted antimagic. In particular, the authors proved that if $G$ is strongly antimagic, then $G$ is $k$-shifted antimagic for all $k \geq 0$.

Moreover, strongly antimagic labelings also possess interesting inductive properties. 
Huang \cite{Huang} showed that if a tree is strongly antimagic, we may attach a pendant edge to each leaf of the tree to create a new tree that inherits the trait of being strongly antimagic. 
This inductive property was used to prove that every spider (a tree with exactly one vertex of degree greater than 2) is strongly antimagic. Chang, Chin, Li and Pan  \cite{strongly} also utilize this property to show  that every double spider (a tree with exactly two vertices of degree greater  than 2) is strongly antimagic. The proofs for both families of graphs are extensive. 

In this article, we aim to further explore and apply the inductive properties of strongly antimagic labelings. 
In Section 2, we establish general  inductive properties for strongly antimagic labelings of graphs. Applying these  results, immediately one can show that some level-wise regular trees are strongly antimagic, and in Sections 3 and 4,  respectively, we give simpler and more direct proofs that spiders and double spiders are strongly antimagic \cite{Huang, strongly}. We also correct a  subtle error in \cite{strongly}. 
Moreover, we prove in \Href{cycle double spider} that cycle double spiders (obtained by replacing every path, excluding the path connecting the two  center vertices, by a cycle) are strongly antimagic. 



In  Section 5, we discuss possible extensions of the inductive approach of strongly antimagic labeling to other graphs and raise some questions for future study.



\section{Inductive Properties}  


For a graph $G$, let $\delta(G)$ and $\Delta(G)$  denote the minimum and maximum degrees of $G$, respectively.  Let $V_i(G)$ be the set of vertices of degree $i$ in $G$. That is, $V_i(G) = \{v \in V(G): \deg(v) = i \}$. When $G$ is clear from context, we simply write $\Delta$ for $\Delta(G)$, and $V_i$ for $V_i(G)$.  A {\it pendant edge} is an edge that is incident to a leaf. 
For a vertex $v \in V(G)$, {\it adding a pendant  edge to $v$} means adding a new leaf $v_{0}$ adjacent to $v$. 
Similarly, {\it deleting a pendant edge from $v$}, say $e =vv_{0}$, means also deleting the leaf $v_{0}$. The first inductive property was due to Huang \cite{Huang} and Chang {\it et al.}  \cite{strongly}:  

\begin{theorem}
{\rm \cite{Huang, strongly}} 
\label{single}
Suppose $G$ is strongly antimagic with $V_i \neq \emptyset$ for some $i$. Then the super graph $G_i$ obtained by adding a pendant edge to each vertex in $V_i$ is strongly antimagic.
\end{theorem}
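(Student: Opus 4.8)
The plan is to build a strongly antimagic labeling $g$ of $G_i$ directly from a given strongly antimagic labeling $f$ of $G$ by placing the new pendant edges at the \emph{bottom} of the label range. Write $m = |E(G)|$ and $k = |V_i|$, so that $G_i$ has $m+k$ edges and $k$ new leaves. First I would shift every old label up by $k$, setting $g(e) = f(e) + k$ for each $e \in E(G)$, and reserve the labels $1, 2, \dots, k$ for the $k$ new pendant edges. Under this shift an old vertex $v \notin V_i$ keeps its degree and satisfies $\phi_g(v) = \phi_f(v) + k\deg(v)$, while a vertex $v \in V_i$ (whose degree rises from $i$ to $i+1$) satisfies $\phi_g(v) = \phi_f(v) + ki + p_v$, where $p_v \in [k]$ is the label I assign to its new pendant edge; the new leaf hanging off $v$ then has $\phi_g$-value exactly $p_v$.

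The point of the shift is that it is \emph{degree-monotone}: since $f$ is strongly antimagic, for two old vertices outside $V_i$ both $\phi_f$ and $k\deg$ increase with degree, so the relative order of their $\phi$-values, and hence the strongly antimagic inequalities among them, is preserved, and ties within a fixed degree are ruled out because $\phi_g$ differs from $\phi_f$ by the constant $k\deg$. The new leaves all receive $\phi_g$-values in $[1,k]$, which is exactly what degree $1$ demands: each such value lies below $\phi_g$ of every vertex of degree at least $2$ (each of those exceeds $k$), and is disjoint from the $\phi_g$-values of any surviving old leaves, which lie in $[k+1, k+m]$. A short calculation using $\phi_f(w) > \phi_f(v)$ whenever $\deg(w) > \deg(v)$ in $G$ shows that the lifted vertices of $V_i$ land strictly above every class of degree $\le i$ and strictly below every class of degree $\ge i+2$, so the only genuinely new interactions happen inside the merged degree-$(i+1)$ class, which consists of the old $V_{i+1}$ together with the lifted $V_i$.

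I expect the main obstacle to be injectivity of $\phi_g$ \emph{within} this merged class. Because $p_v$ varies from vertex to vertex, two vertices $v, v' \in V_i$ with distinct $\phi_f$-values could nonetheless satisfy $\phi_f(v)+p_v = \phi_f(v')+p_{v'}$, and one must also keep the lifted $V_i$ from colliding with $V_{i+1}$. The clean fix is to order $V_i = \{v_1, \dots, v_k\}$ so that $\phi_f(v_1) < \dots < \phi_f(v_k)$ and to assign the pendant label $p_{v_j} = j$. Then $\phi_f(v_j) + j$ is strictly increasing in $j$, so the lifted $V_i$ values are pairwise distinct; moreover $\phi_g(v_j) \le \phi_f(v_j) + k(i+1) < \phi_f(w) + k(i+1) = \phi_g(w)$ for every $w \in V_{i+1}$, since $\phi_f(v_j) < \phi_f(w)$. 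Assembling the cases — new leaves, old vertices outside $V_i$, lifted $V_i$ vertices, and their cross comparisons — then confirms that $g$ is antimagic and respects the degree order, which completes the proof.
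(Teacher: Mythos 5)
Your proof is correct, and all the required comparisons are verified: the new leaves occupy $[1,k]$ strictly below every other $\phi_g$-value, the uniform shift $e \mapsto f(e)+k$ is degree-monotone on the untouched vertices, and the assignment $p_{v_j}=j$ (matching the $\phi_f$-order on $V_i$) keeps the lifted class injective and places it strictly above every class of degree at most $i-1$ and strictly below every class of degree at least $i+2$, with no collision against the old $V_{i+1}$. The route differs from the paper's in packaging rather than substance. The paper never proves Theorem~\ref{single} directly (it is cited from Huang and from Chang et al.); within its own framework the statement follows from the more general Lemma~\ref{big-one}, which adds a \emph{single} edge at a time, shifting all old labels by $1$ and giving the new edge label $1$, and which one iterates over the $V_i$ block from the top of the degree class downward, each step preserving the induced vertex ordering. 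In fact, iterating Lemma~\ref{big-one} in this way produces exactly your labeling: after $k$ rounds the old edges carry $f(e)+k$, and the pendant edge at the vertex with the $j$-th smallest $\phi_f$-value in $V_i$ carries label $j$. So your argument is the closed-form, one-shot version of the paper's induction. What the paper's formulation buys is generality and reusability --- Lemma~\ref{big-one} also allows adding an edge between two existing vertices and is invoked repeatedly for paths, cycles, spiders, and double spiders; what yours buys is a shorter, self-contained proof of this particular theorem, at the cost of having to resolve the within-class collisions explicitly (your observation that an arbitrary assignment of $p_v$ can create ties, fixed by taking $p_{v_j}=j$), a subtlety that the one-edge-at-a-time induction handles automatically.
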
 
  


We shall prove a more general result (\Href{big-one}), which is based on the following observation: any strongly antimagic labeling of $G$ induces a total ordering of $V(G)$. Namely, we define a relation $\preceq$ on $V(G)$ where $u \preceq v$ if and only if $\phi(v) \leq \phi(u)$. With this relation defined, if the set $\{\phi(v): v \in V(G)\}$ is ordered as 
$$
\phi_f(v_{1}) < \phi_f(v_{2}) \dots 
< \phi_f (v_i) < \phi_f(v_{i+1}) < 
\dots < \phi_f (v_{n}),  
$$
then the vertices are ordered as
\begin{equation}
\label{1}
v_{1} \prec v_{2} \dots 
\prec v_i \prec v_{i+1} \prec
\dots \prec v_{n},  
\end{equation}
where the vertices in $V_i$, if $V_i \neq \emptyset$, form a set of consecutive terms in the above ordering.

\begin{lemma}
\label{big-one} Let $G$ be a graph with $\delta(G) \geq 1$. 
Suppose $f$ is a strongly antimagic labeling for $G$ with the ordering in \href{1}. Assume $\deg(v_{j+1}) < \deg(v_{j+2})$ for some $j$, or $j+1=n$. Let  $G^*$ be the super graph of $G$ by adding an edge $e^{*} = uv_{j+1}$  where  
either $u = v_{j}$, if $v_jv_{j+1} \not\in E(G)$, or $u$ is a new vertex  $v^{*}$. Then $G^{*}$ is strongly antimagic. Moreover, there exists a strongly antimagic labeling for $G^*$, 
which preserves the ordering induced by $f$ on $V(G)$. 
\end{lemma}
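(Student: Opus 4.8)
The plan is to exhibit an explicit labeling $f^*$ of $G^*$ obtained from $f$ by a global shift, and then reduce the strongly antimagic property of $f^*$ to a single monotonicity fact about the order \eqref{1}. Concretely, I would set $f^*(e) = f(e)+1$ for every $e \in E(G)$ and $f^*(e^*) = 1$. Since $f$ is a bijection onto $[m]$, the map $f^*$ is a bijection of $E(G^*)$ onto $[m+1]$. The reason for raising every old label by one and giving the new edge the label $1$ is that it produces a uniform formula for the new vertex-sums: for every $w \in V(G)$,
\[
\phi_{f^*}(w) = \phi_f(w) + \deg_{G^*}(w),
\]
because each of the old edges at $w$ contributes an extra $+1$ while the new edge, if incident to $w$, contributes exactly the $1$ that accounts for the increment in $\deg_{G^*}(w)$. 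If $u = v^*$ is a new vertex, then additionally $\phi_{f^*}(v^*) = 1$.

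With this formula, the statement reduces to showing that $\deg_{G^*}$ is non-decreasing along the order $v_1 \prec \cdots \prec v_n$ of \eqref{1}. Indeed, once that is known, for any $a < b$ we have $\phi_f(v_a) < \phi_f(v_b)$ together with $\deg_{G^*}(v_a) \leq \deg_{G^*}(v_b)$, so $\phi_{f^*}(v_a) < \phi_{f^*}(v_b)$; this simultaneously preserves the order on $V(G)$ and shows the values $\phi_{f^*}(v_1), \dots, \phi_{f^*}(v_n)$ are pairwise distinct. To obtain the degree monotonicity I would first record that, since $f$ is strongly antimagic, the contrapositive of the strong condition applied along the $\phi_f$-order gives $\deg_G(v_1) \leq \cdots \leq \deg_G(v_n)$. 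Passing from $G$ to $G^*$ changes degrees only at the endpoints of $e^*$: the degree of $v_{j+1}$ rises by one, and in the case $u = v_j$ the degree of $v_j$ also rises by one.

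The crux, and the step where the hypothesis is genuinely used, is verifying that the two consecutive comparisons around the modified positions survive: $\deg_{G^*}(v_j) \leq \deg_{G^*}(v_{j+1})$, and $\deg_{G^*}(v_{j+1}) \leq \deg_{G^*}(v_{j+2})$ when $j+1 < n$. The first follows from $\deg_G(v_j) \leq \deg_G(v_{j+1})$ in both cases together with the applied $+1$'s. The second is exactly where the assumption $\deg(v_{j+1}) < \deg(v_{j+2})$ is needed, precisely because the degree of $v_{j+1}$ has been raised by one, so that $\deg_{G^*}(v_{j+1}) = \deg_G(v_{j+1})+1 \leq \deg_G(v_{j+2}) = \deg_{G^*}(v_{j+2})$; when $j+1 = n$ this comparison is vacuous. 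Combining these with the already non-decreasing tail and head, transitivity gives that $\deg_{G^*}$ is non-decreasing along the entire order.

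Finally I would place the new vertex and assemble the conclusion. If $u = v^*$, then $\deg_{G^*}(v^*) = 1 \leq \deg_{G^*}(v_1)$, and since $\delta(G) \geq 1$ we have $\phi_{f^*}(v_i) = \phi_f(v_i) + \deg_{G^*}(v_i) \geq 2 > 1 = \phi_{f^*}(v^*)$ for every $i$; thus $v^*$ lies strictly below $v_1$ in both degree and $\phi$-value, all $\phi_{f^*}$-values are distinct, and $v^* \prec v_1 \prec \cdots \prec v_n$ is an order along which degrees are non-decreasing. A labeling whose vertex-sums are pairwise distinct and whose increasing-$\phi$ order has non-decreasing degrees is exactly a strongly antimagic labeling, so $f^*$ is strongly antimagic and, by construction, preserves the order \eqref{1} on $V(G)$. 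I expect the only real obstacle to be the bookkeeping in the degree-monotonicity step, namely confirming that the single exceptional consecutive comparison at $v_{j+1}, v_{j+2}$ is covered exactly by the hypothesis and that the new vertex slots cleanly in at the bottom.
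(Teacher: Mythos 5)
Your proposal is correct and follows essentially the same approach as the paper: the identical labeling $f^*(e)=f(e)+1$ with $f^*(e^*)=1$, the hypothesis $\deg(v_{j+1})<\deg(v_{j+2})$ used at exactly the same spot, and the new vertex $v^*$ slotted in at the bottom with $\phi_{f^*}(v^*)=1$. Your packaging via the uniform formula $\phi_{f^*}(w)=\phi_f(w)+\deg_{G^*}(w)$ and the reduction to degree-monotonicity along the order is a slightly cleaner organization of the paper's case-by-case verification of consecutive $\phi$-inequalities, but the underlying argument is the same.
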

\begin{proof}
 Define a labeling $f^*$ on $G^*$ by 
$$
f^*(e) = \left\{
\begin{array}{llll}
f(e)+1     &\mbox{if $e \in E(G)$}\\
1     &\mbox{if $e=e^*$.} 
\end{array}
\right.
$$
We claim the labeling $f^{*}$ is strongly antimagic for $G^{*}$.
 By definition,
$$
\phi_{f^*}(w) = \left\{
\begin{array}{llll}
\phi_f(w) + \deg_G(w)  &\mbox{if $w \neq v_{j+1}, u$, and $v_j$ (if $u=v_j$)}\\
\phi_f(w) + \deg_G(w)+1 &\mbox{if $w = u = v_{j}$ or $w=v_{j+1}$ }\\
1 &\mbox{if $w = u = v^{*}$}.
\end{array}
\right. 
$$
Note that as $\phi_{f}(v_{j-1}) < \phi_f(v_{j})$ and $\deg_G(v_{j-1}) \leq \deg_G(v_{j})$, we have $$
\phi_{f^*}(v_{j-1}) = \phi_{f}(v_{j-1}) + \deg_G(v_{j-1}) < \phi_{f}(v_{j}) + \deg_G(v_{j}) \leq \phi_{f^*}(v_{j}). 
$$ 
In addition, as $\phi_{f}(v_{j}) < \phi_{f}(v_{j+1})$ and $\deg_G(v_{j}) \leq \deg_G(v_{j+1})$, we get  
$$
\phi_{f^*}(v_{j}) \leq \phi_f(v_{j}) + \deg_G(v_{j})+1 < \phi_{f}(v_{j+1}) + \deg_{G}(v_{j+1}) + 1 = \phi_{f^*}(v_{j+1}). 
$$
If $j+1 = n$, there is nothing further to check. So assume $j+1 < n$. In this case, we have $\deg_{G}(v_{j+1}) + 1 \leq \deg_{G}(v_{j+2})$. Combining this with $\phi_{f}(v_{j+1}) < \phi_{f}(v_{j+2})$,  
we arrive at 
$$
\phi_{f^{*}}(v_{j+1}) = \phi_f(v_{j+1}) + \deg_G(v_{j+1})+1 < \phi_{f}(v_{j+2}) + \deg_{G}(v_{j+2}) = \phi_{f^*}(v_{j+2}).
$$

Now we consider vertices $v_s, v_t \in V(G)$ for some $v_s, v_t$ in \href{1} such that $s < t \leq j-1$. Then  $\deg_{G}(v_s) = \deg_{G^{*}}(v_s)$, $\deg_{G}(v_t) = \deg_{G^{*}}(v_t)$, and $\deg_{G}(v_s) \leq \deg_{G}(v_t)$. 
We simply note
$$
\phi_{f^{*}}(v_s) = \phi_{f}(v_s) + \deg_{G}(v_s) < \phi_{f}(v_t) + \deg_{G}(v_t) \leq  \phi_{f^{*}}(v_t).
$$
Similarly, one can show that $\phi_{f^*}(v_s) < \phi_{f^*} (v_t)$
 if $j+1\leq s < t $. 


Finally, we just note that if $u = v^{*}$, then $\phi_{f^{*}}(v^{*}) = 1$; this is the minimal $\phi$-value as every other vertex has a $\phi$-value of at least 2 under the labeling $f^{*}$
and $v^{*}$ has minimum degree.
Thus, in any case, the existing ordering of the vertices is preserved, possibly with one additional new vertex 
which will be at the beginning of the new ordering. 
\end{proof}

Sequentially adding pendant edges to the leaves of 3-path $P_3$, by \Href{big-one} we obtain

\begin{corollary}\rm
\label{p}
Every path $P_n$, $n \geq 3$, is strongly antimagic. 
\end{corollary}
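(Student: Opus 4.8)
The plan is to induct on $n$, using \Href{big-one} to pass from $P_n$ to $P_{n+1}$. For the base case, I would exhibit an explicit strongly antimagic labeling of $P_3$. Writing $P_3$ as $w_1 w_2 w_3$, set $f(w_1 w_2) = 1$ and $f(w_2 w_3) = 2$, so that $\phi(w_1) = 1$, $\phi(w_3) = 2$, and $\phi(w_2) = 3$. The two leaves $w_1, w_3$ receive distinct sums, both of which are smaller than the sum at the degree-$2$ vertex $w_2$, so $f$ is strongly antimagic; note also that $\delta(P_n) = 1$ throughout, as required by the hypothesis of \Href{big-one}.

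For the inductive step, suppose $P_n$ is strongly antimagic for some $n \geq 3$, and fix a strongly antimagic labeling $f$ with induced ordering $v_1 \prec v_2 \prec \dots \prec v_n$ as in \href{1}. Since $P_n$ has exactly two leaves and $n - 2 \geq 1$ internal vertices of degree $2$, the strongly antimagic property forces every leaf to have a strictly smaller $\phi$-value than every degree-$2$ vertex; hence $v_1$ and $v_2$ are precisely the two leaves, while $v_3, \dots, v_n$ are the degree-$2$ vertices. In the notation of \Href{big-one} I would take $j = 1$, so that $v_{j+1} = v_2$ is the leaf of larger $\phi$-value, and the required hypothesis holds since $\deg(v_2) = 1 < 2 = \deg(v_3)$. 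Choosing $u = v^*$ to be a new vertex, \Href{big-one} then attaches the pendant edge $e^* = v^* v_2$ and produces a strongly antimagic labeling of the resulting graph.

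Finally I would observe that attaching a pendant edge at a leaf of a path merely lengthens the path by one vertex, so the resulting graph is $P_{n+1}$; by induction, $P_n$ is strongly antimagic for every $n \geq 3$. I do not expect a substantive obstacle, since \Href{big-one} performs all of the labeling work. The only point demanding care is verifying, at each step, that the leaf receiving the new pendant sits at the top of the degree-$1$ block of the ordering --- that is, immediately below a degree-$2$ vertex --- which is exactly what the strongly antimagic property guarantees as soon as at least one degree-$2$ vertex is present, i.e. for all $n \geq 3$.
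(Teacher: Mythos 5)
Your proposal is correct and follows essentially the same route as the paper: the paper's proof is precisely to start from $P_3$ and sequentially add pendant edges at a leaf via \Href{big-one}, which is the induction you carry out explicitly. Your verification that the hypothesis $\deg(v_{j+1}) < \deg(v_{j+2})$ holds with $j=1$ (since the two leaves occupy the bottom of the induced ordering) is exactly the point the paper leaves implicit.
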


A {\it complete level-wise regular tree} of height $h$, denoted by $T^1_{t_0, t_1, \ldots, t_{h-1}}$ (or $T^2_{t_0, t_1, \ldots, t_{h-1}}$, respectively), is a tree rooted at a  single vertex $w$ (or two vertices $w$ and $w'$, respectively) at level-0 where each vertex at level-$i$ (shorter distance $i$ from $w$ or $w'$) has degree $t_i$.  The following result can be obtained by \Href{big-one} or \Href{single}: 

\begin{corollary}
\label{level-wise regular trees}
Every complete level-wise regular tree $T^1_{t_0, t_1, \ldots, t_{h-1}}$ or $T^2_{t_0, t_1, \ldots, t_{h-1}}$ is strongly antimagic, if  
$
t_h \leq t_{h-1} \leq \cdots \leq t_1 \leq t_0. 
$
\end{corollary}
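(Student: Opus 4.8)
The plan is to build the tree outward from its root (for $T^1$) or its two adjacent roots (for $T^2$), one level at a time, by repeatedly invoking \Href{single}; the two families are then handled by the same argument once the base case is settled. For the base case I start from the height-$1$ tree: the star $K_{1,t_0}$ in the $T^1$ case, and the double star formed by two adjacent vertices $w,w'$ each bearing $t_0-1$ pendant leaves in the $T^2$ case. Each is easily seen to be strongly antimagic; for the star, labeling the edges $1,\dots,t_0$ gives the leaves the distinct $\phi$-values $1,\dots,t_0$ and the center the strictly greater value $\binom{t_0+1}{2}$, and the double star admits an analogous explicit labeling that separates the two centers and places both above all leaves.

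For the inductive step, suppose I have already realized $T^{1}_{t_0,\dots,t_{k-1}}$ (of height $k$) as a strongly antimagic tree whose leaves are exactly the degree-$1$ vertices at level $k$. To promote every level-$k$ vertex to its prescribed degree $t_k$ while spawning the next layer, I apply \Href{single} successively with $i=1,2,\dots,t_k-1$: the first application ($i=1$) gives each current leaf a pendant child, turning every level-$k$ vertex into a degree-$2$ vertex and creating the level-$(k+1)$ leaves; and the $r$-th application ($i=r$) finds each level-$k$ vertex sitting at degree $r$ and raises it to degree $r+1$. After $t_k-1$ applications every level-$k$ vertex has exactly $t_k-1$ children, hence degree $t_k$, and the new level-$(k+1)$ vertices are leaves. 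Iterating over $k=1,\dots,h-1$ yields $T^{1}_{t_0,\dots,t_{h-1}}$, and as each application preserves strong antimagicity by \Href{single}, the final tree is strongly antimagic.

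The step that genuinely requires the hypothesis $t_h\leq t_{h-1}\leq\cdots\leq t_0$, and which I expect to be the only real obstacle, is verifying that each invocation of \Href{single} touches precisely the intended vertices, i.e. that when I call it with parameter $i=r$ the set $V_r$ is exactly the set of level-$k$ vertices then being built. This is where the monotonicity enters: the already-completed inner vertices (levels $0,\dots,k-1$) retain their final degrees $t_0,\dots,t_{k-1}$, each of which satisfies $t_j\geq t_k>t_k-1\geq r$, so no inner vertex ever belongs to $V_r$; and the not-yet-processed leaves at level $k+1$ have degree $1$, so they could only be captured when $r=1$ — but at that moment they do not yet exist, since it is exactly the $r=1$ application that produces them. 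Hence $V_r$ consists solely of the level-$k$ vertices at every round, the applications are clean, and the construction carries through verbatim for $T^2$ starting from the double star. (One may equally perform each round edge-by-edge using \Href{big-one} in place of \Href{single}.)
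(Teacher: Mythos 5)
Your proposal is correct and takes essentially the same approach as the paper: the paper obtains this corollary precisely by such repeated applications of \Href{single} (or \Href{big-one}), growing the tree outward from the root(s) one pendant edge per vertex at a time. Your write-up simply supplies the details the paper leaves implicit, namely the star/double-star base case and the verification, via the monotonicity $t_0 \geq t_1 \geq \cdots \geq t_{h-1}$, that at each round the set $V_r$ consists exactly of the level-$k$ vertices being processed.
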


The following two corollaries will be especially useful in proving that every cycle double spider is strongly antimagic (see  \Href{cycle double spider}). 

\begin{corollary}
\label{add a cycle}
Let $G$ be a graph without leaves.  Assume $f$ is a strongly antimagic labeling for $G$ with  the order \href{1}. Suppose $j$ satisfies either $\deg(v_j) \leq \deg(v_{j+1}) -2$ or $j = |V(G)|$. Then for any positive integer $k \geq 3$, the super graph $G^{*}$ of $G$ obtained by attaching a $k$-cycle to $v_j$ is strongly antimagic. 
\end{corollary}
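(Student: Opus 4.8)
The plan is to realize the attached $k$-cycle as the outcome of a sequence of single-edge additions, each an instance of \Href{big-one}, so that strong antimagicness and the induced vertex ordering are maintained at every stage. Write the cycle to be created as $v_j - u_1 - u_2 - \cdots - u_{k-1} - v_j$, where $u_1, \ldots, u_{k-1}$ are the $k-1$ new vertices, so that $\deg_{G^*}(v_j) = \deg_G(v_j) + 2$. Since each application of \Href{big-one} inserts any brand-new vertex at the bottom of the ordering and leaves the rest of the order intact, I will track at each step only the degrees and relative positions of the freshly added vertices.

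First I would grow a pendant path of length $k-2$ out of $v_j$, namely $v_j - u_1 - u_2 - \cdots - u_{k-2}$. The initial edge $v_j u_1$ is added by \Href{big-one} with $v_j$ as the endpoint receiving the edge: this is legitimate because $\deg(v_j) \leq \deg(v_{j+1}) - 2 < \deg(v_{j+1})$ (or $v_j$ is the maximum, handled by the $j = |V(G)|$ clause). Each subsequent edge $u_i u_{i+1}$ attaches a new pendant to the current bottom leaf $u_i$ of degree $1$; the vertex immediately above $u_i$ in the ordering is either a previously built path vertex of degree $2$ or, at the very first such step, an original vertex of $G$, which has degree at least $2$ because $G$ has no leaves. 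Hence the degree hypothesis of \Href{big-one} (the endpoint receiving the new edge has strictly smaller degree than the vertex immediately above it) holds, and this is exactly where the no-leaves assumption is used.

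Next I would add one further pendant edge $v_j u_{k-1}$ directly to $v_j$. After the path was built we have $\deg(v_j) = \deg_G(v_j) + 1 \leq \deg(v_{j+1}) - 1 < \deg(v_{j+1})$, and $v_{j+1}$ is still the vertex immediately above $v_j$ (the order among original vertices never changed), so \Href{big-one} applies again; this is precisely where the slack of $2$ in the degree hypothesis is consumed. The new vertex $u_{k-1}$ enters at the bottom, so the two lowest vertices become $u_{k-1} \prec u_{k-2}$, both of degree $1$, with a degree-$2$ vertex immediately above them ($u_{k-3}$, or an original vertex when $k = 3$). Finally I would close the cycle with the edge $u_{k-2}u_{k-1}$, applying \Href{big-one} with $u_{k-2}$ as the endpoint receiving the edge and $u_{k-1}$ as the vertex immediately below it; the needed inequality $\deg(u_{k-2}) = 1 < 2$ holds and $u_{k-2}u_{k-1} \notin E$. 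A count confirms that $k-1$ vertices and $k$ edges were added, so $G^*$ is $G$ with a $k$-cycle through $v_j$.

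The main obstacle is arranging the closing edge. The naive route -- building the whole path $v_j - u_1 - \cdots - u_{k-1}$ and then joining $u_{k-1}$ back to $v_j$ -- fails, since $u_{k-1}$ lands at the very bottom of the ordering while $v_j$ sits far above, so the two endpoints of the closing edge are neither consecutive nor in the configuration \Href{big-one} allows. The fix is to stop the path one vertex early and attach the last cycle vertex $u_{k-1}$ as a \emph{second} pendant of $v_j$: this drops $u_{k-1}$ directly beneath the free end $u_{k-2}$ of the path, turning the final join into a legal ``connect the receiving endpoint to the vertex just below it'' move. Checking that this reordering genuinely places $u_{k-1}$ and $u_{k-2}$ consecutively at the bottom with the correct degrees is the one point requiring care; every other step is a direct invocation of \Href{big-one}.
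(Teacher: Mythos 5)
Your proof is correct, and it rests on the same engine as the paper's: realize the attached $k$-cycle as a sequence of single-edge additions, each licensed by \Href{big-one}, spending the degree slack $\deg(v_j)\leq\deg(v_{j+1})-2$ on exactly the two cycle edges at $v_j$ and using the no-leaves hypothesis so that the original bottom vertex $v_1$ has degree at least $2$, which lets pendants hang off degree-$1$ vertices. The sequencing, however, is genuinely different. The paper attaches \emph{both} pendant edges $v_ju_1$ and $v_ju_{k-1}$ at the outset, then grows the two arms in parallel (one new pendant per arm per round) and closes the cycle with a final edge in the middle of the two arms; this forces a case split on the parity of $k$ (with $k=3$, odd $k\geq 5$, and even $k$ treated separately). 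You instead grow a single pendant path of length $k-2$, attach the last vertex $u_{k-1}$ to $v_j$ only at the end, and close the cycle at the bottom of the ordering between $u_{k-2}$ and $u_{k-1}$, which are then consecutive, both of degree $1$, and non-adjacent -- precisely the configuration \Href{big-one} permits. Your version handles all $k\geq 3$ uniformly with no parity analysis, which is a modest but real simplification; your identification of why the naive closing move (joining $u_{k-1}$ back up to $v_j$) is not a legal application of \Href{big-one} is exactly the right point of care. One detail worth noting in both arguments: when two degree-$1$ vertices occupy the bottom two positions, a pendant edge can only be attached to the \emph{higher} one, since the vertex immediately above the lower one then has degree $1$, not strictly larger; the paper's parallel growth silently depends on performing its two per-round additions in the correct order, whereas your construction never creates two leaves until the final step, where the edge joining them is itself the legal move.
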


\begin{proof}
Initially, we add two new vertices $u_{k-1}$ and $u_1$ and two pendant edges $v_ju_1$ and $v_ju_{k-1}$. By our assumption that $\deg(v_j) \leq \deg(v_{j+1})-2$ or $j = |V(G)|$, \Href{big-one} implies that the  extended graph is strongly antimagic, where $u_1$ and $u_{k-1}$ are the only leaves. 

If $k=3$, by \Href{big-one}, the super graph $G^{*}$ formed by adding the edge $u_1u_2$ is strongly antimagic. If $k \geq 5$ and $k$ is  odd, we add two new vertices $u_2$ and $u_{k-2}$ along with pendant edges $u_1u_2$ and $u_{k-1}u_{k-2}$. Continue the same process of adding two new edges $u_iu_{i+1}$ and $u_{k-i}u_{k-i-1}$ along with new vertices $u_{i+1}$ and $u_{k-i-1}$ for $1 \leq i \leq \frac{k-3}{2}$. Finally we add the edge $u_{\frac{k-1}{2}} u_{\frac{k+1}{2}}$ to form a $k$-cycle incident to $v_j$.  By \Href{big-one} the final super graph of $G$ is strongly antimagic. 

Assume $k$ is even. Let $k=2k'$. We use the same process of adding edges $u_iu_{i+1}$ and $u_{k-i}u_{k-i-1}$ along with new vertices for $1 \leq i \leq k'-2$. Then we add only one edge $u_{k'-1}u_{k'}$ and one new vertex $u_{k'}$. 
Finally, we add the edge $u_{k'}u_{k'+1}$ to form a $k$-cycle incident to $v_j$. By \Href{big-one} the final super graph of $G$ is strongly antimagic. 
\end{proof}

\begin{corollary}
\label{add a path}
Let $G$ be a graph with exactly two  leaves $u$ and $v$.  Assume $f$ is a strong antimagic labeling for $G$. Then for any positive integer $k \geq 1$, the super graph $G^{*}$ of $G$ obtained by adding a new $uv$-path of length $k$ is strongly antimagic.
\end{corollary}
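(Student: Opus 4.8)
The plan is to build the new $uv$-path one vertex at a time, and finally one edge, invoking \Href{big-one} at each step. The structural observation driving everything is that, since $u$ and $v$ are the only leaves of $G$ and hence $\delta(G)=1$, the strong antimagic property forces the two degree-$1$ vertices to have the two smallest $\phi$-values; so $u$ and $v$ occupy the first two positions of the ordering \href{1}. Write $u = v_1$ and $v = v_2$ with $\phi_f(v_1) < \phi_f(v_2)$. Because $u,v$ are the \emph{only} leaves, every other vertex has degree at least $2$; in particular the vertex in position $3$ has degree at least $2$, and it exists since $G$ admits an antimagic labeling, so $G \neq P_2$ and $|V(G)| \geq 3$.

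I would first carry out an extension phase. Apply \Href{big-one} with $v_{j+1}$ the current leaf in position $2$ and with $u$ taken to be a \emph{new} vertex $v^*$; this attaches a pendant vertex to that leaf. The hypothesis of \Href{big-one} holds because, with $j=1$, the position-$2$ leaf has degree $1$ while the position-$3$ vertex is a non-leaf of degree at least $2$, so $\deg(v_{j+1}) < \deg(v_{j+2})$. The new graph is again strongly antimagic and still has exactly two leaves: the newly added pendant (now in position $1$, with $\phi$-value $1$) and the former position-$1$ leaf (now in position $2$). Thus the two leaves again occupy the first two positions, and iterating the step alternately extends two pendant paths, one rooted at $u$ and one rooted at $v$.

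After $k-1$ such extensions I will have added $k-1$ new degree-$2$ vertices forming two internally disjoint pendant paths attached at $u$ and at $v$, with free ends $p$ and $q$ that are the only two leaves, occupying positions $1$ and $2$. A single final application of \Href{big-one}, now taking $v_j = p$, $v_{j+1} = q$ and $u = v_j$ so as to add the edge $pq$, merges these two paths into one $uv$-path of length exactly $k$. This is legitimate because $p$ and $q$ lie on distinct pendant paths (for $k=1$ they are $u$ and $v$ themselves), so $pq \notin E$, and $\deg(q)=1$ is again strictly smaller than the degree of the position-$3$ vertex. As every step preserves strong antimagicity, the resulting graph, which is precisely $G^*$, is strongly antimagic.

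The part needing the most care is the invariant that exactly two leaves are maintained and that they always occupy the first two positions of the induced ordering, since this is exactly what supplies the strict gap $\deg(v_{j+1}) < \deg(v_{j+2})$ required to apply \Href{big-one} at each step; one must also track that the alternating extensions keep the two free ends on \emph{different} pendant paths, so that joining them yields a single $uv$-path of the correct length. The only degenerate situation is $k=1$ with $uv \in E(G)$, which would make $\{u,v\}$ a $P_2$-component and cannot occur for a connected $G \neq P_2$ whose two leaves are $u$ and $v$; hence $pq = uv \notin E$ as needed.
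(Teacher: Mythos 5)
Your proof is correct and is essentially the argument the paper intends: the paper states this corollary without an explicit proof, and your iterated use of \Href{big-one} (repeatedly attaching a pendant vertex to the position-2 leaf, then closing with one final edge between the two leaves in positions 1 and 2) exactly mirrors the paper's own proof of \Href{add a cycle}. One small repair: connectivity of $G$ is not part of the hypothesis, but your $k=1$ degenerate case is still impossible without it, since $uv \in E(G)$ with both $u,v$ of degree 1 would give $\phi_f(u) = \phi_f(v) = f(uv)$, contradicting that $f$ is antimagic.
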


\section{Spiders and Cycle Spiders Are Strongly Antimagic}


Recall that a spider is a tree with exactly one vertex of degree greater than 2. Denote a spider by $\S$ with $\deg(u) \geq 3$ for some vertex $u$ called the {\it center} of $\S$. We view $\S$ as a tree that combines  $\deg(u)$  paths together by identifying one end vertex of each path to $u$. We call each of these paths a {\it leg} of $\S$. A star is a spider where each leg has length one. 

 Huang  \cite{Huang} and Shang \cite{Shang} independantly proved that every spider is strongly antimagic.  Shang's proof  gave an antimagic labeling of a spider. Huang's proof used  \Href{single}. Applying the inductive properties in the previous section, we present a more direct and simpler proof. 

\begin{lemma}\rm
\label{path}
For every degree-2 vertex $u$ on the  path $P_n$, $n \geq 3$, there exists a strongly antimagic labeling $f$ for $P_n$ with $\phi(u) = \max\{\phi(w): \deg(w)=2\}$. 
\end{lemma}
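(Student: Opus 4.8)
The plan is to give a direct, explicit labeling rather than an inductive one: the inductive process of \Href{big-one} only lets one add a pendant edge at the current top leaf of a path, which forces the two arms meeting at $u$ to grow in near-lockstep, so it cannot place $u$ off-center. Write $P_n$ as $x_0 x_1 \cdots x_{n-1}$ with edges $e_i = x_{i-1}x_i$, so the leaves are $x_0, x_{n-1}$ and $u = x_k$ for some $1 \le k \le n-2$; by reversing the path I may assume $k \le (n-1)/2$. The guiding idea is to force $\phi(u)$ to the top and the two leaf-sums to the bottom by a single placement of the extreme labels, and then to fill the interior so that no two vertex-sums coincide.

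For the generic range $2 \le k \le n-3$, the two edges at $u$ and the two leaf-edges are four distinct edges. First I would set $f(e_k) = n-1$ and $f(e_{k+1}) = n-2$, giving $\phi(u) = 2n-3$; since every other degree-$2$ vertex is incident to at most one of these two edges, its sum is at most $(n-1)+(n-3) = 2n-4 < 2n-3$, so $u$ is the unique maximum among degree-$2$ vertices no matter how the remaining labels are placed. Next I would put the two smallest labels $1$ and $2$ on the leaf-edges $e_1, e_{n-1}$; then $\phi(x_0), \phi(x_{n-1}) \in \{1,2\}$ while every internal sum is at least $1 + 3 = 4$, so both leaves sit below all degree-$2$ vertices, as the strong antimagic property demands. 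It remains to distribute the labels $\{3, \dots, n-3\}$ over the interior edges of the two arms so that all internal sums are pairwise distinct. The clean device is to separate the two arms by residues: assign the labels increasingly toward $u$ along each arm so that consecutive edges on the left arm produce sums in one class modulo $4$ and those on the right arm in another (in the balanced case this is exactly ``odd labels on one arm, even labels on the other''), after which only the four vertices adjacent to $u$ or to a leaf must be checked individually.

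Finally I would handle the two special cases $k = 1$ and $k = n-2$ separately, since there an edge at $u$ \emph{is} a leaf-edge and one cannot place both of $\{n-1,n-2\}$ at $u$; instead one puts $n-1$ on the interior edge at $u$ and a small label $c$ on the leaf-edge, and makes the outer edge of the neighbouring vertex smaller than $c$, so that $\phi(u) = c + (n-1)$ still exceeds its neighbour while the leaf-sum $c$ stays small. The main obstacle I anticipate is precisely the interior distinctness for \emph{unbalanced} arms: when the arms have different lengths a naive parity split of the labels is unavailable, so the residue-separation has to be arranged with some care, and the handful of boundary vertices (the neighbours of $u$ and of the two leaves) carry small or mixed sums that must be verified not to collide with the opposite arm's sums.
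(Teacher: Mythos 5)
Your two framing devices are correct and are the easy part of the lemma: putting $n-1$ and $n-2$ on the two edges at $u$ does make $\phi(u)=2n-3$ the strict maximum over degree-2 vertices (every other such vertex sums to at most $(n-1)+(n-3)$), and putting $1,2$ on the leaf edges does pin both leaves below every degree-2 vertex. The genuine gap is that the entire substance of the lemma --- pairwise distinctness of the interior sums --- is never established. Your residue-separation rule is only defined in the balanced case, and you yourself defer the unbalanced case to being ``arranged with some care.'' That care is not a routine boundary check. For instance, take $n=10$, $u=x_4$, and the natural unbalanced instantiation: left-arm edges $e_2,e_3$ get $3,4$ and right-arm edges $e_8,e_7,e_6$ get $5,6,7$, each arm increasing toward $u$, with $f(e_1)=1$, $f(e_9)=2$, $f(e_4)=9$, $f(e_5)=8$. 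Then $\phi(x_2)=3+4=7=5+2=\phi(x_8)$ and $\phi(x_3)=4+9=13=7+6=\phi(x_6)$: collisions between arm sums and exactly the boundary vertices you propose to ``check individually.'' Such collisions occur for infinitely many $(n,k)$ (e.g.\ whenever $n\equiv k \pmod 2$ and $k\le n-6$, the vertex $x_{k-1}$, whose sum is $n+k-1$, coincides with a right-arm sum). A repaired assignment exists in each case, but a rule that works uniformly in the two arm lengths is precisely the missing proof. Your special cases $k\in\{1,n-2\}$ have a second unresolved problem: with $n-1$ on the interior edge at $u$ and $c$ on its leaf edge, a degree-2 vertex far from $u$ can sum to as much as $(n-2)+(n-3)=2n-5$, so either $c\ge n-3$ --- and then the leaf value $c$ must still sit below \emph{every} degree-2 sum, a global constraint your local prescription (``outer edge of the neighbouring vertex smaller than $c$'') does not enforce --- or else the whole interior must be arranged so that all sums stay below $c+n-1$. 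Either way this case needs a full global construction, essentially the paper's alternating formula, not a local fix.

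Separately, your opening reason for rejecting the inductive route is mistaken, and it is worth seeing why, since it is how the paper closes exactly the hole your proposal leaves open. \Href{big-one} does force the two ends of a path to grow in alternation, but the paper starts the induction from the \emph{off-center} subpath $v_{n-k},\ldots,v_{k+1}$, in which $u=v_k$ is adjacent to a leaf and is handled by one explicit alternating labeling; the residual growth then needed on the two sides of $u$ is equal ($n-k-1$ on each), so lockstep growth lands $u$ precisely at its prescribed position in $P_n$. That route settles all positions of $u$ with a single explicit labeling plus repeated use of \Href{big-one}, whereas your route still requires a general interior-assignment rule together with separate complete constructions for $k\in\{1,n-2\}$.
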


\begin{proof}
The case for $n=3$ is trivial. Assume $n \geq 4$.  Denote $V(P_n) = \{v_1, v_2, \dots, v_n\}$ and $E(P_n) = \{e_i: e_i = v_{i}v_{i+1}, 1 \leq i \leq n-1\}$. 

We first prove the case that  $u=v_{n-1}$ 
Define a function $f$ on $E(P_n)$ by
$$
f(e_i) = \left\{
\begin{array}{lll}
 \lfloor\frac{i+1}{2} \rfloor    &\mbox{if $n \not\equiv i$ (mod 2)}  \\ 
 \\
\frac{n+i}{2}     &\mbox{if $n \equiv i$ (mod 2)}. 
\end{array}
\right.
$$
It is easy to verify that $f$ is a strongly antimagic labeling for $P_n$ with $\phi(v_{n-1}) = \max\{\phi(w): \deg(w)=2\}$. 

It  remains to show that the statement holds when $u = v_{k}$ for some $\frac{n}{2} < k \leq n-2$; by symmetry the remaining cases   follow. 
According to the above discussion, there is a strongly antimagic labeling $f$ for the sub-path of $P_n$:  
$$
P'=\{v_{n-k}, v_{n-k+1}, \dots, v_{k}, v_{k+1}\}, 
$$
where $\phi_f(v_{k})=\max\{\phi_f(v): v \in V(P')\}$.  By repeatedly using \Href{big-one}, one can extend $f$ to a strongly antimagic labeling $g$ for $P_n$ so that  $\phi_{g}(u)$ remains the largest among all vertices in $P_n$. 
The proof is complete. 
\end{proof}

\begin{theorem} {\rm \cite{Shang, Huang}} 
\label{spider}
Spiders are strongly antimagic. 
\end{theorem}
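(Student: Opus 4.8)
The plan is to build the spider from the inside out: use \Href{path} to create a good starting labeling, and then use \Href{big-one} (together with \Href{single}) to attach the remaining legs, always keeping the center as the unique vertex of maximum $\phi$-value so that the order \eqref{1} stays under control. Write the center as $u$, set $d=\deg(u)\geq 3$, and order the legs by length $\ell_1 \geq \ell_2 \geq \cdots \geq \ell_d$.

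First I would form the path $P$ consisting of the longest leg, the center $u$, and the second-longest leg, so that $P$ already realizes legs $1$ and $2$ at their full lengths while $u$ has degree $2$ on $P$. By \Href{path} there is a strongly antimagic labeling of $P$ with $\phi(u)$ maximum among the degree-$2$ vertices; since the only vertices of smaller degree are the two endpoints of $P$, this makes $u$ the global maximum of \eqref{1}. It then remains to attach legs $3,\dots,d$ at $u$. Each time I start a new branch at $u$ I apply \Href{big-one} with $v_{j+1}=u$ (the case $j+1=n$), so that $u$ keeps the largest $\phi$-value while a new leaf is created; when a whole family of legs is to receive the same length I would instead grow an entire layer at once using \Href{single}, which is precisely the mechanism behind \Href{level-wise regular trees}.

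The step I expect to be the main obstacle is lengthening a branch hanging off $u$ to a length greater than one. The difficulty is that \Href{big-one} assigns label $1$ to every newly added edge, so each new leaf enters the ordering at the \emph{bottom}: a freshly attached tip is therefore not the maximum of its degree class and cannot be extended again directly, whereas extending whichever leaf currently sits on top would lengthen an already-completed leg. To get around this I would establish, as the crux of the argument, a spider analogue of \Href{path}: for any prescribed leaf $w$ of a strongly antimagic spider there is a strongly antimagic labeling in which $\phi(w)$ is maximum among all degree-$1$ vertices. With such a repositioning result in hand the construction closes cleanly — to build leg $k$ I attach a pendant edge at $u$ and then stretch the branch to length $\ell_k$ by repeatedly (i) relabeling so that its current tip is the top leaf and (ii) applying \Href{big-one} to add one more edge. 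Because relabeling changes only the $\phi$-values and not the tree itself, the previously completed legs keep their lengths, and $u$, being the unique vertex of maximum degree, remains the global maximum throughout. Controlling this much of the order \eqref{1} — not merely its top element — is where the real work lies; everything else is bookkeeping with \Href{big-one} and \Href{single}.
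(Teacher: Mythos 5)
Your construction stalls exactly where you predict, and the repositioning result you invoke to get past it is not available: the claim that for any prescribed leaf $w$ of a strongly antimagic spider there is a strongly antimagic labeling with $\phi(w)$ maximal among all degree-$1$ vertices is asserted but never proved, and it does not follow from \Href{big-one}, \Href{single}, or \Href{path}. It is a positioning lemma of exactly the same character as \Href{path} itself (or as \Href{spider-ext} in the paper), i.e., the kind of statement that requires an explicit labeling scheme and its own analysis. Since every later step of your plan (attach a pendant edge at $u$, then stretch that branch edge by edge) passes through this lemma, what you have is a reduction of the theorem to an unproven statement of comparable difficulty, not a proof.

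The gap is avoidable, and the paper's proof shows how: never extend one leg at a time. The paper runs the induction in reverse: repeatedly delete one leaf from \emph{every} leg (undone by \Href{single} applied to $V_1$, which adds a pendant edge to every leaf simultaneously), and whenever the shortest leg shrinks to a single pendant edge at $u$, delete that edge (undone by \Href{big-one}, which is legitimate because $u$, being the unique vertex of maximum degree at those stages, always sits at the top of the ordering \eqref{1}). Iterating this terminates at a path whose two sides are \emph{shrunk} copies of the two longest legs, and \Href{path} supplies the base labeling with $u$ maximal among degree-$2$ vertices, hence globally maximal. Because legs are always grown in parallel, no individual leg tip ever needs to be at the top of its degree class, which is precisely the control your approach cannot obtain. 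Note that your very first step is what creates the problem: by building legs $1$ and $2$ at full length, you make \Href{single} unusable afterwards (it would lengthen the completed legs as well), so the one-leg-at-a-time extension, and with it the missing lemma, becomes unavoidable in your setup.
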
 

\begin{proof}
Let $\S$ be a spider with $\deg(u) \geq 3$.  Repeatedly in each round we delete one leaf from each leg of $\S$, until some leg becomes a pendant edge $e'$. Denote this reduced spider by $\S'$. By  \Href{single}, it is enough to show that $\S'$ is strongly antimagic. If $\S'$ has more than three legs, then by  
\Href{big-one}, it is enough to prove that $\S'-e'$ is strongly antimagic. Note, $\S'-e'$ has one leg less than $\S$ does.  
Repeat the entire process again to $\S' - e'$, until we obtain a spider $\S^*$ where $\deg_{\S^*} (u)= 3$ and $u$ is incident to a pendant edge $e^*$. Deleting $e^*$ from $\S^*$ we get a path.  
By \Href{big-one} and \Href{path}, $\S$ is strongly antimagic. 
\end{proof}

A {\it cycle spider} is established by replacing each leg of a spider by a cycle. 

\begin{corollary}
\label{cycle spider}
Every cycle spider is strongly antimagic.  
\end{corollary}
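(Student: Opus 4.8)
The plan is to build the cycle spider one cycle at a time, repeatedly invoking \Href{add a cycle}. Write the cycle spider as the union of $d \geq 3$ cycles $C_{k_1}, C_{k_2}, \ldots, C_{k_d}$ (each $k_i \geq 3$) that pairwise meet only at the center $u$, so that $\deg(u) = 2d$ while every other vertex has degree $2$. For $1 \leq t \leq d$, let $G_t$ be the subgraph consisting of the first $t$ of these cycles glued at $u$, so $G_d$ is the whole cycle spider. I would prove by induction on $t$ that each $G_t$ is strongly antimagic and that $u$ is its \emph{unique} vertex of maximum $\phi$-value.

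For the base case $t=1$, the graph $G_1 = C_{k_1}$ is $2$-regular and hence antimagic \cite{regular}; since all of its vertices have equal degree the strong condition is vacuous, so the labeling is strongly antimagic. Being antimagic, it has a unique vertex attaining the maximum $\phi$-value, and by the rotational symmetry of the cycle we may take this vertex to be the designated center $u$. Then in the induced ordering \href{1} we have $v_{|V(G_1)|} = u$, so the hypothesis $j = |V(G_1)|$ of \Href{add a cycle} is met, and $G_1$ has no leaves.

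For the inductive step, suppose $G_t$ is strongly antimagic, has no leaves, and has $u$ as its unique maximum-$\phi$ vertex. Since $u$ is the unique vertex of maximum degree $2t$ in $G_t$ (for $t \geq 2$), the strong antimagic property pins it to the top of the ordering \href{1}, i.e. $v_{|V(G_t)|} = u$. Applying \Href{add a cycle} with $j = |V(G_t)|$ and cycle length $k_{t+1}$ then attaches a $k_{t+1}$-cycle to $u$ and yields a strongly antimagic labeling of $G_{t+1}$. The attached cycle introduces only new vertices of degree $2$ and raises $\deg(u)$ to $2(t+1)$, so $u$ remains the unique vertex of maximum degree and $G_{t+1}$ still has no leaves; hence $u$ is again the unique maximum-$\phi$ vertex, closing the induction. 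Taking $t = d$ shows the cycle spider is strongly antimagic.

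The only point needing care is verifying the hypothesis of \Href{add a cycle} at every stage, namely that the vertex receiving the new cycle sits at the very top of the ordering \href{1}. This is exactly what the inductive bookkeeping supplies: at each stage $u$ is the unique vertex of strictly largest degree, so the strong antimagic property forces $\phi(u)$ to be the maximum and gives $j = |V(G_t)|$. Beyond this, the argument reduces entirely to the antimagicness of a single cycle together with a direct appeal to the earlier inductive lemmas, so no further computation is required.
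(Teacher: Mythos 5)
Your proposal is correct and takes essentially the same approach as the paper: start from a strongly antimagic labeling of a single cycle (regular graphs are antimagic, hence strongly antimagic, and the center $u$ can be taken as the maximum-$\phi$ vertex), then attach the remaining cycles to $u$ one at a time via \Href{add a cycle}. Your inductive bookkeeping making explicit that $u$ stays at the top of the ordering \eqref{1} (since it is the unique vertex of maximum degree once $t \geq 2$) is exactly the implicit justification in the paper's shorter argument.
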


\begin{proof}
It is known \cite{regular} that regular graphs are antimagic. Hence they are strongly antimagic. Now start with a strongly antimagic labeling of a cycle $C_n$, where the vertex, say $u$, has the largest $\phi$-value.  We sequentially attach cycles to $u$ to obtain the given cycle spider, which by \Href{add a cycle} is strongly antimagic.   
\end{proof}


\section{Double Spiders and Cycle Double Spiders Are Strongly Antimagic}


Recall that a double spider is a tree with exactly two vertices of degree greater than two. Denote a double spider by $\SS$ where $\deg(u) \geq \deg(v) \geq 3$. We view  $\SS$ as a tree composed of three parts: middle, left, and right. Draw $\SS$ on the plane with $u$ on the left and $v$ on the right. The path from $u$ to $v$ is the {\it middle path}. A path from a leaf to $u$ that is also edge-disjoint from the middle path is said to be a {\it left leg} of the double spider. By replacing $u$ with $v$ in the definition of left leg, a {\it right leg} is defined similarly.

Prior to applying the labeling scheme, we apply a reduction algorithm to the double spider, repeatedly removing leaves and pendant edges until we are left with a double spider that fits into one of two maximally reduced cases. After we apply the labeling scheme to the reduced double spider, we obtain the strongly antimagic labeling of the original double spider by repeated applications of \Href{big-one}.

The reduction algorithm \Href{euclid} is defined below; its execution mainly consists of invoking two procedures: \textsc{DeleteLeaves()} and \textsc{DeletePendantEdge()}. The reduction algorithm generalizes the reduction previously applied to spiders in the proof of \href{spider}; the main difference is that the characterization of maximally reduced is more subtle for double spiders than for spiders. Let us briefly describe the auxiliary procedure \textsc{DeleteLeaves()}. The procedure shall only be used on double spiders $\SS$ such that neither $u$ nor $v$ is incident to a pendant edge. In such a double spider, every leaf is adjacent to a vertex of degree 2. The procedure \textsc{DeleteLeaves()} will remove every leaf and its incident pendant edge. Note that after using this procedure once on a double spider $\SS$, the number of leaves removed and the number of pendant edges removed are both $\deg(u)+\deg(v)-2$. The procedure \textsc{DeletePendantEdge()} takes as input a vertex $w$ that is adjacent to a leaf and removes the leaf adjacent to $w$ and its incident pendant edge. Further, by ``swap $u$ and $v$" in line 20 of \Href{euclid}, we simply mean to exchange the names. 

\begin{algorithm}[H]
\caption{Double Spider Reduction}
\label{euclid}
\begin{algorithmic}[5]

\Function{ReductionComplete}{$\SS$}
    \If {($\deg(v) = 3$) $\land$ \Call{HasLeaf}{$v$}}
        \If {$(\deg(u) < 5)$ $\lor$ $!$\Call{HasLeaf}{$u$}}
            
            \State \Return true
        \EndIf
    \ElsIf {$\deg(u) = \deg(v) + 1$ $\land$ \Call{HasLeaf}{$u$}}
        \If {$!$\Call{HasLeaf}{$v$}}
            
            \State \Return true
        \EndIf
    \EndIf
        
    \State \Return false

\EndFunction
\\
\Procedure{ReduceDoubleSpider}{$\SS$}
\Repeat
\While {$!$(\Call{HasLeaf}{$u$} $\lor$ \Call{HasLeaf}{$v$})}

\State \Call{DeleteLeaves}{$\SS$}
\EndWhile

\If {\Call{HasLeaf}{$v$}}
\If {$\deg(v) \geq 4$}

\State \Call{DeletePendantEdge}{$v$}
\EndIf
\EndIf

\If {\Call {HasLeaf}{$u$}}
    \If {$\deg(u) = \deg(v) = 4$}
        
        \State \Call{DeletePendantEdge}{$u$}
        
        \State Swap $u$ and $v$
    
    \ElsIf {$\deg(u) \geq \deg(v) + 2$}
    
        \State \Call{DeletePendantEdge}{$u$}
    \EndIf
\EndIf
\Until{\Call{ReductionComplete}{$\SS$}}
\EndProcedure
\end{algorithmic}
\end{algorithm}

The reduction algorithm  will repeatedly call \textsc{DeleteLeaves()} until $u$ or $v$ is incident to a pendant edge. Once $u$ or $v$ is incident to a pendant edge, we check the relevant conditions to see if we may apply the procedure \textsc{DeletePendantEdge()} at one or both of $u$ and $v$. Then we check if the double spider is maximally reduced by using the function \textsc{ReductionComplete()}. Note that if we did not actually remove a pendant edge from $u$ or $v$, \textsc{ReductionComplete()} will always return true, and we will terminate the algorithm. However, if we did remove at least one pendant edge, we face two possibilities: either the double spider became maximally reduced after the removal(s) and we terminate the algorithm, or further removals are possible and we again try to execute \textsc{DeleteLeaves()}.

After \href{euclid}, it suffices to consider the final reduced spiders, described in the following:

\begin{proposition} 
\label{2 types}
To prove that all double spiders are strongly antimagic it is enough to show that the following double spiders $\SS$, $\deg(u) \geq \deg(v) \geq 3$, are strongly antimagic:  
\begin{enumerate} [{\rm (a)}]
    \item $\deg(v)=3$, and $v$ is adjacent to at least one leaf. Further, if $\deg(u) \geq 5$, then $u$ is not adjacent to a leaf. 
\item $\deg(u)=\deg(v)+1$, $u$ is adjacent to at least one leaf,  but $v$ is not adjacent to any leaf. 
\end{enumerate}
\end{proposition}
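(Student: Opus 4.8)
The plan is to prove the proposition by running the reduction algorithm \Href{euclid} \emph{in reverse}. Record the sequence of primitive operations the algorithm performs on the input double spider, writing $\SS = S_0 \to S_1 \to \cdots \to S_N$, where each arrow is a single call to \textsc{DeleteLeaves()} or \textsc{DeletePendantEdge()} and $S_N$ is the terminal reduced double spider. Assuming every double spider of type (a) or (b) is strongly antimagic, I would prove by downward induction on $k$ that each $S_k$ is strongly antimagic: given a strongly antimagic labeling of $S_{k+1}$, I reconstruct one of $S_k$ by undoing the operation that produced $S_{k+1}$. Applying this from $k=N-1$ down to $k=0$ yields that $\SS = S_0$ is strongly antimagic.

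Before the induction I would establish three preliminary facts. First, an \emph{invariant}: throughout the execution the graph stays a double spider with $\deg(u)\ge\deg(v)\ge 3$, since each \textsc{DeletePendantEdge} is guarded so that the affected center keeps degree at least $3$, and ``Swap $u$ and $v$'' only renames. Second, \emph{termination}: the \textsc{DeleteLeaves} while-loop runs only while all legs have length at least $2$ and shortens each by one, so it halts as soon as some center acquires a pendant; every \textsc{DeletePendantEdge} strictly lowers the edge count; hence \textsc{ReductionComplete} is eventually reached and returns true. Third, \emph{exactness of the stopping rule}: reading off the two true-branches of \textsc{ReductionComplete} shows they coincide verbatim with cases (a) (namely $\deg(v)=3$ with $v$ incident to a leaf, and $u$ leaf-free whenever $\deg(u)\ge 5$) and (b) (namely $\deg(u)=\deg(v)+1$ with $u$ incident to a leaf but $v$ not).

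For the inductive step I would use the two tools already available. Undoing a \textsc{DeleteLeaves} call is precisely adding a pendant edge to every leaf; since \textsc{DeleteLeaves} is invoked only when neither center carries a pendant, the leaves of $S_{k+1}$ are exactly the vertices to be re-extended, so $S_k=(S_{k+1})_1$ in the notation of \Href{single} (with $i=1$), which applies directly because a double spider has leaves. Undoing a \textsc{DeletePendantEdge} at a center $w$ is adding one new pendant vertex to $w$, which I would realize through \Href{big-one}; the only thing to check is its hypothesis, that in the ordering \href{1} of $S_{k+1}$ the vertex $w$ is last or is immediately followed by a vertex of strictly larger degree. Because the only vertices of degree greater than $2$ are $u$ and $v$, and every leaf and degree-$2$ vertex precedes both of them in any strongly antimagic labeling, $w$ is either the global maximum ($w=v_n$) or sits immediately before the other center, which has strictly larger degree: peeling $u$ under the guard $\deg(u)\ge\deg(v)+2$ leaves $\deg(u)-1>\deg(v)$, and peeling $v$ under the guard $\deg(v)\ge 4$ leaves $\deg(v)-1<\deg(u)$, so the required strict gap is present in each case.

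The main obstacle is the bookkeeping in the last step, especially in a pass where both centers are peeled. Since the \textsc{ReductionComplete} test and the \textsc{DeletePendantEdge} guards are evaluated on the \emph{current} degrees — which the $v$-peel may already have lowered — one must track the pair $(\deg(u),\deg(v))$ through a full iteration of the Repeat loop and confirm that, when the pendants are re-attached in reverse order (undo the $u$-peel before the $v$-peel, noting that the swap is a mere renaming), the separation $\deg(u)-\deg(v)$ always supplies the strict inequality demanded by \Href{big-one}. This is exactly where ``maximally reduced'' is more delicate for double spiders than for spiders, and I expect it to require a careful enumeration of the admissible degree pairs — together with a tie-breaking convention for two equal-degree centers (choosing $v$ to be leaf-incident when $\deg(u)=\deg(v)$) — to rule out a spurious nonterminating configuration. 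Pinning down precisely these admissible pairs is what the two branches of \textsc{ReductionComplete} are engineered to guarantee, and it is the locus of the subtle error in \cite{strongly} that this argument corrects.
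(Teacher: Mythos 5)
Your proposal is correct and takes essentially the same approach as the paper: the paper justifies \Href{2 types} precisely by running \Href{euclid} in reverse, undoing \textsc{DeleteLeaves()} via \Href{single} and undoing \textsc{DeletePendantEdge()} via \Href{big-one}, with the algorithm's guards supplying the strict degree gaps required by \Href{big-one} and the two true-branches of \textsc{ReductionComplete()} coinciding with cases (a) and (b). Your write-up is in fact somewhat more careful than the paper's largely implicit argument, since the paper never addresses the equal-degree stalemate (e.g.\ $\deg(u)=\deg(v)$ with only $u$ leaf-incident, where no guard fires yet \textsc{ReductionComplete()} returns false), which you correctly flag and resolve with a tie-breaking convention.
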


To tackle  \href{2 types} (a), we start with the following result for spiders: 

\begin{lemma} 
\label{spider-ext}
Suppose $\S$ is a spider with $\deg(u) \geq 3$ and one of the following holds:

\begin{enumerate} [{\rm (i)}]
    \item $\deg(u) \geq 3$, and $\S$ has at least two legs of lengths longer than 1; 
\item  $\deg(u) = 4$, and $\S$ has only one leg of length longer than 1. 
\end{enumerate}
Then for any degree-2 vertex $v$ that is not adjacent to a leaf, there  exists a strongly antimagic labeling $f$ of $\S$ with $\phi_f(v) = \max\{\phi_f(w): \deg(w) = 2\}$.  
\end{lemma}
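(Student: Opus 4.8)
The plan is to reduce, by \Href{big-one}, to a structurally simple spider, to label that spider by hand, and then rebuild $\S$. Write the leg carrying $v$ as $u=a_0,a_1,\dots,a_m$, so that $a_m$ is a leaf and $v=a_i$ with $1\le i\le m-2$ (the bound $i\le m-2$ records that $v$ is not adjacent to a leaf). Let $H$ be the spider obtained from $\S$ by keeping this leg intact and shortening every other leg as far as possible: under hypothesis (i) I retain one further leg, say $u,b_1,b_2$, of length $2$ and turn every remaining leg into a pendant edge at $u$; under hypothesis (ii) the three legs other than the one carrying $v$ are already pendant edges, so $H=\S$. It then suffices to find a strongly antimagic labeling of $H$ in which $v$ is the maximum among the degree-$2$ vertices. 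Indeed, $\S$ is recovered from $H$ by lengthening the shortened legs one pendant edge at a time, each step being an instance of \Href{big-one} (applied, as in the proof of \Href{spider}, at the leaf of largest $\phi$-value). By that lemma each step preserves the induced order of the existing vertices and places the new vertex at the bottom; since we only ever lengthen legs, $v$ stays the maximal degree-$2$ vertex and $u$ (of degree $\ge 3$) stays the global maximum.

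The heart of the argument is the explicit labeling of $H$, whose target order is $\phi(u)>\phi(v)>(\text{other degree-}2\text{ vertices})>(\text{leaves})$ with all values distinct. First I would place large labels on the two edges $a_{i-1}a_i$ and $a_ia_{i+1}$ at $v$, together with a leg-labeling in the spirit of \Href{path}, so that $\phi(v)$ exceeds every other degree-$2$ sum. The remaining labels must then be distributed so that $\phi(u)$ overtakes $\phi(v)$ while every pendant edge, whose label is exactly the $\phi$-value of its leaf, is kept small enough to sit below all degree-$2$ vertices. Under (i) the extra weight at $u$ is supplied by the edge $ub_1$: since $b_1$ has degree $2$ this edge may safely carry a large label, so that $\phi(u)$, which equals $f(ua_1)+f(ub_1)$ plus the (small) labels of the pendant edges at $u$, can be pushed past $\phi(v)$. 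Under (ii) the weight is supplied instead by the three pendant edges at the degree-$4$ centre, whose labels, although individually small, together contribute enough to lift $\phi(u)$ above $\phi(v)$.

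The main obstacle is exactly this balancing in $H$, and it is here that hypotheses (i) and (ii) are indispensable. In the case they jointly exclude --- $\deg(u)=3$ with a single long leg --- every edge at $u$ other than $ua_1$ is a pendant edge forced to carry a small label, so once the labels needed to make $\phi(v)$ top the degree-$2$ vertices are committed there is not enough weight left at $u$ to secure $\phi(u)>\phi(v)$; for example, when $v$ is two edges deep the labels available at $u$ sum to strictly less than $\phi(v)$, and no labeling can succeed. The second long leg in (i), and the additional pendant edges of the degree-$4$ centre in (ii), are precisely what restore this missing weight. What then remains is to convert the scheme into a single labeling formula uniform in the depth $i$ of $v$ and in $\deg(u)$, and to verify that the resulting $\phi$-values are pairwise distinct; I expect this bookkeeping, rather than any further conceptual point, to be the bulk of the write-up.
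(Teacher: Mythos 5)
Your reduction-and-rebuild strategy breaks at the rebuild, and this is a genuine gap, not a bookkeeping issue. \Href{big-one} only permits attaching a pendant edge to a vertex $v_{j+1}$ that sits at the top of its degree class (its hypothesis is $\deg(v_{j+1})<\deg(v_{j+2})$, or $j+1=n$), and the new leaf it creates enters the order at the very bottom, with $\phi$-value $1$. Now recall that the leaf $a_m$ ending the leg that carries $v$ must never be extended, and it remains a leaf throughout your reconstruction. Consequently, the moment you lengthen any shortened leg, its new end-leaf lies strictly below $a_m$ in the order, and it can never again become the top of the degree-$1$ class while $a_m$ exists; so each shortened leg can be lengthened \emph{at most once} in the entire process. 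Concretely, take $\S$ with legs of lengths $5$, $4$, $1$ and $v$ at depth $3$ on the first leg: your $H$ has legs $5$, $2$, $1$, and regrowing the second leg from length $2$ to length $4$ requires two applications of \Href{big-one} to the same leg, which the above shows is impossible. The proof of \Href{spider} escapes this obstruction because it regrows \emph{all} legs simultaneously, each round being one application of \Href{single} to the whole degree-$1$ class; that tool is closed to you, since such a round would also lengthen the leg carrying $v$. So case (i) of your argument does not go through as written.

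Independently of this, the base-case labeling of $H$ --- which in case (ii) is the entire lemma, since there $H=\S$ --- is never actually produced: ``place large labels at $v$'' and ``distribute the rest so that $\phi(u)$ overtakes $\phi(v)$'' state the goal, not a construction, and the deferred verification is exactly where the difficulty of this lemma lives. The paper is organized the opposite way: it keeps every leg at full length and builds an explicit parity-based scheme (Labeling A, three alternating phases), invoking \Href{big-one} only to strip surplus pendant edges at the center $u$. That reduction is sound precisely because its reverse attaches pendant edges to $u$ itself, the vertex of maximum $\phi$-value (so it is $v_n$, the case $j+1=n$ of \Href{big-one}), which preserves the order of the old vertices and hence keeps $v$ maximal among degree-$2$ vertices. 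Note also that the bookkeeping you postpone is not routine: in case (ii) the generic scheme fails when the $u$--$v$ path has length $2$ and the path below $v$ has length $3$, forcing the ad hoc labeling of \Href{8}, and in case (i) with a pendant edge present, eight parity cases must be checked separately. If you want to salvage your outline, the first thing to change is the reduction: do not shorten legs of length at least $2$ at all; only remove pendant edges at $u$, and then face the full-length spider with an explicit labeling.
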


Combined with \Href{big-one}, \Href{spider-ext} implies
\begin{corollary} 
\label{coro}
A double spider $\SS$ is strongly antimagic if $\deg(u) \geq \deg(v)$ $=3$, $v$ is adjacent to exactly one leaf, and at least one of the following holds: 
\begin{enumerate}[{\rm (i)}]
    \item $u$ is adjacent to at least one internal vertex, excluding the middle path;
\item $\deg(u) = 4$, and $u$ is adjacent to only leaves, excluding the middle path. 
\end{enumerate}
\end{corollary}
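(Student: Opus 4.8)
The plan is to derive \Href{coro} from \Href{spider-ext} by a single deletion and reattachment of the lone leaf at $v$, mirroring the way \Href{spider} was reduced to \Href{path}. First I would delete the pendant edge $e_v = vv_0$ joining $v$ to its unique leaf $v_0$. Since $\deg(v)=3$, the vertex $v$ has one neighbor along the middle path and two neighbors starting its right legs, exactly one of which is $v_0$; thus one right leg is the single edge $vv_0$ and the other has length at least $2$. After deleting $e_v$, the vertex $v$ has degree $2$ and $u$ becomes the unique vertex of degree $\geq 3$, so the resulting graph $\S'$ is a spider centered at $u$. Its legs are the $\deg(u)-1$ left legs of $\SS$ together with one ``long'' leg obtained by concatenating the middle path with the surviving right leg; this long leg has length at least $3$, and $\deg_{\S'}(u)=\deg(u)\geq 3$.

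Next I would check that $\S'$ meets the hypotheses of \Href{spider-ext} with the distinguished degree-$2$ vertex $v$. Note $v$ is not adjacent to a leaf in $\S'$: its two neighbors are the next vertex on the long right leg (internal, since that leg has length $\geq 2$) and its middle-path neighbor (either internal or $u$). Under hypothesis (i) of \Href{coro}, some left leg has length $\geq 2$, so $\S'$ has at least two legs of length greater than $1$ (this left leg together with the long leg), placing us in case (i) of \Href{spider-ext}. Under hypothesis (ii), $\deg(u)=4$ and every left leg is a single edge, so $\S'$ has exactly one leg of length greater than $1$ and $\deg_{\S'}(u)=4$, placing us in case (ii). In either case \Href{spider-ext} supplies a strongly antimagic labeling $f$ of $\S'$ with $\phi_f(v)=\max\{\phi_f(w):\deg(w)=2\}$.

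Finally I would reattach $v_0$ using \Href{big-one}. List the vertices of $\S'$ in the order \href{1} induced by $f$. Because $f$ is strongly antimagic, every degree-$2$ vertex precedes every vertex of degree $\geq 3$, and since $\phi_f(v)$ is maximal among degree-$2$ vertices, $v$ is the last degree-$2$ vertex in this list; as $\deg(u)\geq 3$ guarantees at least one later vertex, $v$ is immediately followed by a vertex of degree $\geq 3$. Writing $v=v_{j+1}$, we therefore have $\deg(v_{j+1})=2<\deg(v_{j+2})$, so \Href{big-one} applies with the new vertex $v^{*}$ in the role of the attaching endpoint. Adding the pendant edge $v^{*}v$ yields a strongly antimagic labeling of the graph obtained from $\S'$ by attaching one pendant edge at $v$, which is exactly $\SS$.

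The two labeling moves are black-box invocations of \Href{spider-ext} and \Href{big-one}, so the only genuine work is the structural bookkeeping: confirming that conditions (i) and (ii) of \Href{coro} translate precisely into conditions (i) and (ii) of \Href{spider-ext} once the middle path and a right leg are fused into the long leg, and that $v$ is never adjacent to a leaf in $\S'$ and sits atop the degree-$2$ block so that \Href{big-one} is applicable. I expect this translation of the leg-length conditions to be the main point to verify carefully; everything else is routine.
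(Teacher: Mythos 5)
Your proposal is correct and is exactly the argument the paper intends: the paper states \Href{coro} as an immediate consequence of \Href{spider-ext} combined with \Href{big-one}, i.e., delete the pendant edge at $v$ to get a spider satisfying (i) or (ii) of \Href{spider-ext}, label it so that $v$ tops the degree-2 vertices, and reattach the leaf via \Href{big-one}. Your structural bookkeeping (the fused long leg, $v$ not adjacent to a leaf in $\S'$, and $v$ sitting just below the degree-$\geq 3$ block) fills in precisely the details the paper leaves implicit.
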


Assume $\SS$ is a double spider with  $\deg(u)=4 = \deg(v)+1$, and all vertices adjacent to $u$ or $v$ are leaves, except the middle path. Let $\SS' = \SS-e'$ where $e'$ is a pendant edge incident to $u$. By the symmetric roles played by $u$ and $v$ in  $\SS'$ 
and by \Href{big-one}, $\SS$ is strongly antimagic if $\SS'$ is.
Thus, to show that $\SS$ is strongly antimagic when $\deg(u) \geq 4 = \deg(v) + 1$ and all right legs are pendant edges, we may focus on the case when $u$ is adjacent to at least one internal vertex in addition to its neighbor on the middle path.

Together  with \Href{coro}, to prove \Href{2 types} (a), it  remains to show:


\begin{lemma} 
\label{a3}
A double spider $\SS$ 
with $\deg(u) \geq \deg(v)=3$ is strongly antimagic if any of the following holds:
\begin{enumerate}[{\rm (i)}]
    \item $\deg(u) \geq 4$, $v$ is adjacent to two leaves, and $u$ is adjacent to at least one internal vertex, besides the middle path.  
\item $\deg(u) = 3$, $v$ is adjacent to at least one leaf, and $u$ is adjacent to two leaves. 
\end{enumerate}
\end{lemma}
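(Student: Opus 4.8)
The plan is to treat both parts of \Href{a3} by one strategy: in each case there is a single vertex of degree $3$ that carries two pendant leaves — this is $v$ in (i) and $u$ in (ii) — and the idea is to delete those two pendant edges, handle the resulting spider with the machinery of Section 3, and then reattach the two leaves. Deleting the two leaves at $v$ in case (i) turns $\SS$ into a spider centered at $u$ (with $\deg(u)\ge 4$), in which the middle path becomes one leg ending at the former center $v$, now a leaf, and in which $u$ is still adjacent to an internal vertex; deleting the two leaves at $u$ in case (ii) symmetrically yields a spider centered at $v$ of degree $3$. In either case the spider inherits enough long legs to be handled by \Href{spider}, \Href{spider-ext} and \Href{path}.

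The difficulty — and the reason \Href{a3} is not already covered by \Href{coro} — is the reattachment of the two pendant edges to a single vertex. One cannot simply invoke \Href{big-one} twice: after adding the first pendant the vertex moves from the top of the degree-$1$ block to the \emph{bottom} of the degree-$2$ block (because \Href{big-one} preserves the induced ordering), so the hypothesis ``top of its degree class, with the next class strictly larger'' fails for the second application. Nor can one delete only one of the two leaves and then make the resulting degree-$2$ vertex the largest degree-$2$ vertex: its remaining leaf-neighbour would in general be forced above some degree-$2$ vertex, violating the leaf ordering. Thus no single incremental step produces the required degree-$3$ vertex, and the reattachment must be carried out by a direct labeling.

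Accordingly, I would reduce only the side that does not carry the two-leaf cherry. In case (i) this means stripping $u$'s surplus pendant legs — permissible because $u$ is the unique maximum-degree vertex, hence the global maximum, so the ``$j+1=n$'' branch of \Href{big-one} always applies — and shortening $u$'s remaining legs, arriving at a base double spider that still carries the full middle path, $v$'s two-leaf cherry, and a minimal $u$ (degree $4$ with a single internal leg). In case (ii) one instead shortens $v$'s single non-leaf leg down to minimal length. I would then label the base graph \emph{explicitly}: place the largest labels on the edges joining the degree-$3$ center(s) to the rest of the graph, the smallest labels on the leaves and on the central edge(s) of the middle path, and a path-type pattern as in \Href{path} along the middle path and the internal leg. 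A direct check then gives leaves $<$ degree-$2$ $<$ centers with all $\phi$-values distinct. Finally I would grow back to the general configuration by repeated use of \Href{big-one}, reversing the reduction (adding pendant edges and lengthening legs on the reduced side) while keeping the two pendant leaves of the cherry fixed; since \Href{big-one} preserves the induced ordering, the decisive inequalities established for the base graph are maintained throughout.

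The main obstacle is exactly this base-case inequality: the degree-$3$ center carrying two pendant leaves must out-score \emph{every} degree-$2$ vertex, yet two of its three incident edges are forced to carry small labels so that its leaves stay below the degree-$2$ band. Consequently almost all of its $\phi$-value must come from its single internal (middle-path) edge, and one must verify that the large label there, together with the two small leaf-labels, clears the entire degree-$2$ band for every middle-path length and every admissible leg configuration. In case (ii) the two centers have equal degree, so between them only distinctness of $\phi$-values (not their order) is required, which provides a little slack; but each center must still clear the whole degree-$2$ band, so the same explicit labeling and the same verification carry the argument.
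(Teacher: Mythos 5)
Your reduction-and-regrow strategy breaks at the regrow step, and it breaks for exactly the reason you yourself identified when explaining why the cherry cannot be reattached by two applications of \Href{big-one}. That lemma only permits adding an edge at a vertex that currently tops its degree block (or is the global top), and whenever a leaf is extended, the newly created leaf enters the ordering at the very bottom (its $\phi$-value is $1$), with the rest of the ordering preserved. Consequently, from your minimal base you can extend each non-cherry leaf at most once: after the end-leaf of $u$'s internal leg and $u$'s two pendant leaves have each been extended, every remaining leaf other than $q,q'$ lies \emph{below} the cherry in the ordering, so the top of the degree-$1$ block is a cherry leaf, and \Href{big-one} then allows no extension except at the cherry --- which must stay fixed. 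A target configuration in which $u$ has an internal leg of length $5$, or two legs each needing two more edges, is therefore unreachable from your base; the same obstruction kills the regrowth of $v$'s non-leaf leg in case (ii). This is precisely why the paper never lengthens legs inductively in this proof: its only reduction is stripping surplus pendant edges at $u$ (re-attachable because $u$ is the global maximum, the $j+1=n$ branch), while arbitrary leg lengths and the full middle path are handled by one direct labeling of the \emph{general} configuration --- a modified Labeling A for (i), and explicit edge orderings of the underlying path for (ii). If you enlarge your base so that regrowth is unnecessary, your plan collapses into needing exactly such a general labeling, which is the actual content of the lemma.

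The second gap is that base labeling itself, which you leave as ``a direct check'' but which fails under your stated prescription. If the cherry edges receive the smallest labels, say $f(q)=1$ and $f(q')=2$, then the middle-path edge $e_{x-1}$ adjacent to $e_x$ satisfies $f(e_{x-1}) \geq 3$, so $\phi(v)-\phi(w_{x-1}) = f(q)+f(q')-f(e_{x-1}) \leq 0$: the center fails to clear the degree-$2$ band, for every middle-path length $x \geq 2$. If instead you push middle-path labels below the cherry labels, then interior middle-path vertices carry two small labels and their sums start colliding with the single small labels on leaves. The paper resolves this tension in a way opposite to ``smallest labels on the leaves'': $q$ and $q'$ receive consecutive \emph{medium} labels at the very end of Phase I; the alternating phase structure guarantees every degree-$2$ vertex carries at least one Phase II/III label and hence beats every leaf, including the cherry; and the decisive inequality $f(q)+f(q') > f(e_{x-1})$ is obtained by the counting argument $|A| > |B|$, where $A$ is the set of edges labeled before $q$ and $B$ the set labeled between $q'$ and $e_{x-1}$. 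Since the regrowth and this verification are the two load-bearing steps of your plan, the proposal as written does not go through.
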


 \noindent
In the rest of the section, we present the proofs of  \Href{spider-ext}, \Href{a3}, and \Href{2 types} (b). 

\medskip
\noindent
{\it Proof of \Href{spider-ext})} 
Denote $m \geq 6$ the number of edges in $\S$. 
Let $P$ be the leg of $\S$ that includes $v$. 
We represent $\S$ by three parts: {\it left}, {\it middle}, and {\it right}. All the legs (paths) of $\S$ except $P$ are the {\it left} paths, the path from $u$ to $v$ on $P$  is the {\it middle} path, and the path from $v$ to the leaf on $P$ is the {\it right} path, denoted by $R$.

A path is called {\it even} or ${\it odd}$ depending on the parity of its length. Suppose among the left paths, there are $y$ pendant edges, $c$ odd paths of length at least 3, and $d$ even paths.  Denote the set of $y$ pendant edges by $Y$, the $c$ left odd paths (and the $d$ even-paths, respectively) by LO$_1$, LO$_2$, $\dots$, LO$_c$ (LE$_1$, LE$_2$, $\dots$, LE$_d$, respectively). Note, since $v$ is not adjacent to a leaf, the length of $\R$ is at least 2. We also write $\R$ as $\RO_1$ or $\RE_1$ when the length of $\R$ is odd or even,  respectively.

To simplify the notations we use $e_i$ to  represent the edges respectively on each path,  without indicating which path they reside on (since that will be clear from context). A left path of length $l \geq 2$ is denoted by $(u_0, u_1, u_2, \dots, u_l = u)$ with edges $e_{i}=u_{i-1} u_i$, $1 \leq i \leq l$.  The middle path (of length $x$) is denoted by  
$$
X: (u=w_0, w_1, w_2 \dots, w_{x-1}, w_x=v)
$$ 
with edges $e_1 = uw_1$ and $e_i = w_{i-1}w_{i}$ for $2 \leq i \leq x$. The right path $\R$ ($\RO_1$ or $\RE_1$) is denoted by: $v=v_0, v_1, v_2, \dots, v_k$ with edges $e_i=v_{i-1}v_{i}$. 

Note that every left path ends at $u$, while the (only) right path starts at $v$. 
An edge $e_i$ in $\R \cup \LO \cup \LE$ is called {\it even}  (or {\it odd}, respectively) if its index $i$ is {\it even} (or {\it odd}, respectively). For a path $\LO_j$, the set of even (or odd, respectively) edges is denoted by $\LEO_j$ (or $\LOO_j$, respectively). Similarly, $\LOE_j$ and $\LEE_j$ denote the sets of odd and even edges in $\LE_j$,  respectively.  Also, the sets of even and odd edges in $\RE_1$ (or $\RO_1$, respectively) are denoted by $\REE_1$ and $\ROE_1$ (or $\REO_1$ and $\ROO_1$, respectively), respectively. For the middle path ${\rm X}$,  
we define two ordered subsets, $X_1$ and $X_2$, as follows.
$$
X_1= \left\{
\begin{array}{lll}
\emptyset &\mbox{if $x \leq 3$;} \\
(e_{x-2}, e_{x-4}, \dots, e_{2}) &\mbox{if $x$ is even, $x \geq 4$;}\\
(e_{3}, e_{5}, \dots, e_{x-2}) &\mbox{if $x$ is odd, $x \geq 5$.}
\end{array} 
\right. 
$$ 
$$
X_2= \left\{
\begin{array}{lll}
\emptyset &\mbox{if $x=1$;}\\
(e_{x-1}, e_{x-3}, \dots, e_{1}) &\mbox{if $x$ is even;}\\
(e_{2}, e_{4}, \dots, e_{x-1}) &\mbox{if $x$ is odd, $x \geq 3$.}
\end{array} 
\right. \hspace{1in}
$$
Then
$$
X \setminus (X_1 \cup X_2)
= \left\{
\begin{array}{lll} 
\{e_x\} &\mbox{if $x$ is even or $x=1$;} \\
\{e_1, e_x\}  &\mbox{if $x$ is odd, $x \geq 3$.}
\end{array}
\right. \hspace{1in}
$$

Next we define a labeling scheme, called Labeling A, which is defined by the following linear order of the edges. 
We label the edges with labels 1 through $m=|E(\S)|$, following the ordering so that the $j$th edge in the order receives label $j$. 
If  one set is empty, we proceed to the next. The order is organized in three levels, called Phases I, II, and III.

\medskip

\noindent
\fbox{{\bf Labeling A}}

\medskip

\noindent
$$
\begin{array}{lllll}
&&\ROO^1 \to \LOO^{-1} \to X_1 \to \REE \to \LOE \to Y    &&\mbox{(Phase I)} \\ 
&\to& \REO \to \LEO \to X_2 \to \ROE \to \LEE  &&\mbox{(Phase II)}\\
&\to&Z &&\mbox{(Phase III)}\\
\end{array}
$$

In the above, $\ROO^1$ means that we label all odd edges except the first odd edge in $\RO_1$ (if $\R$ is an odd path),  in the order $e_3, e_5, \ldots$, till the end of the last edge in $\RO_1$. Similarly, $\LOO^{-1}$ labels all odd edges in each $\LO_1, \LO_2, \ldots, \LO_c$,  one path after another (in the increasing order of the indices of the edges), skipping the {\it last} edge in $\LO_1$. For other sets in the ordering, we label edges without exclusion. For instance, $\LEO$ means we label even edges in the odd left paths $\LEO_1, \LEO_2, \ldots, \LEO_c$, one path after another, in this order; and for edges in $\LEO_i$ we label them according to the indices of the edges in $\LEO_i$,  in increasing order.  

After Phases I and II, the remaining unlabeled edges are denoted by  $Z = \{z_1, z_2, z_3, z_4\}$ where $z_1$ is the first edge ($e_1$) in $\RO_1$ (if exists), and $z_2$ is the last edge in $\LO_1$ (if exists). If $x=1$ or $x$ is even,  then $z_3=e_x \in X$, and $z_4$ does not exist. If $x$ is odd and $x \geq 3$, then $z_3=e_1 \in X$ and $z_4=e_x \in X$. Hence, $1 \leq |Z| \leq 4$. 
We label $Z$ in the order of $z_1, z_2, z_3, z_4$. If $z_i$ does not exist for some $i$, we move on to $z_{i+1}$, until all edges in $Z$ are labeled.  
See \Href{Label A} as an example.

\begin{figure}[h]
\centering
\small
\begin{tikzpicture}[scale = 0.6]
\draw (-6,8) -- (16,8); 
\draw (0,8) -- (0,11);
\draw (0,8) -- (2.8,10.8);
\draw[fill=black] (-6,8) circle (0.1);
\draw[fill=black] (-4,8) circle (0.1);
\draw[fill=black] (-2,8) circle (0.1);
\draw[fill=black] (2.8,10.8) circle (0.1);
\draw[fill=black] (0,8) circle (0.1); 
\draw[fill=black] (2,8) circle (0.1);
\draw[fill=black] (4,8) circle (0.1);
\draw[fill=black] (6,8) circle (0.1);
\draw[fill=black] (8,8) circle (0.1);
\draw[fill=black] (10,8) circle (0.1);
\draw[fill=black] (12,8) circle (0.1);
\draw[fill=black] (14,8) circle (0.1);
\draw[fill=black] (16,8) circle (0.1); 
  \draw[fill=black] (1.5,9.5) circle (0.1);
    \draw[fill=black] (0, 9.8) circle (0.1);
      \draw[fill=black] (0,11) circle (0.1);
	\node [left, red] at (8.5, 9) {{\Large $v$}};
\node [right] at (10.5, 8.3) {$3$};
			\node [right, red] at (3.8, 10) { $\phi(v)=25=\max\{\phi(w): \deg(w)=2\}$};
		\node [right] at (14.5, 8.3) {$4$};
		\node [right] at (2.5, 8.3) {$2$};
	\node [right] at (-5.8, 8.3) {$1$};
		\node [right] at (-0.6, 10.3) {$5$};
		\node [right] at (1.5, 10.3) {$6$};
	\node [right] at (0.2, 9.2) {$13$};
	\draw (0.8,9.2) circle (0.4);
		\node [right] at (-1.5, 8.3) {$14$}; 
\draw (-1,8.3) circle (0.4); 
\draw (-1,8.3) circle (0.5); 
		\node [right] at (-3.3, 8.3) {$7$}; 
\draw (-3,8.3) circle (0.4);
	\node [right] at (0.9, 8.3) {$9$};
	\draw (1.2,8.3) circle (0.4);
		\node [right] at (4.5, 8.3) {$8$};
	\draw (4.9,8.3) circle (0.4);
		\node [right] at (8.6, 8.3) {$10$};
	\draw (9.2,8.3) circle (0.4);
		\node [right] at (12.5, 8.3) {$11$};
	\draw (13,8.3) circle (0.4);
		\node [right] at (-0.9, 9.2) {$12$};
\draw (-0.3,9.2) circle (0.4);
	  		\node [right] at (6.5, 8.3) {$15$};
\draw (7,8.3) circle (0.4);
\draw (7,8.3) circle (0.5);
\end{tikzpicture}
\caption {\small{Labeling A applied to $\S$,  $\R=\RE_1$, $|\RE_1|=|X|=4$, $|Y|=0$, $|\LO|=1$, and $|\LE|=|Z|=2$. The circled  and double circled numbers are labeled in Phases II and III, respectively; the rest are labeled in Phase I.}}
\label{Label A}
\end{figure}
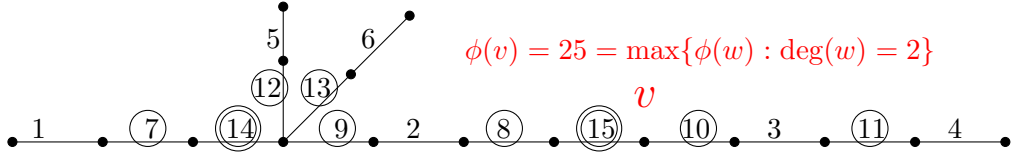

We make some simple observations on the general structure of the ordering defining the labeling scheme. First, each left leg or right leg of length at least 2 splits into a set of even edges and a set of odd edges; exactly one of these subsets contains one or more pendant edges. We assign one subset to Phase I and the other to Phase II, and we note that the choice is made so that the subset containing pendant edges always belongs to Phase I. In the same vein, the middle path (except one or two edges) splits into even edges and odd edges, and we choose the one whose parity matches the parity of $|X|$ for Phase I. Second, within a given phase, the left and right odd paths always precede the left and right even paths, and their labelings are separated by the labeling of a subset of the middle path. Third, for fixed path parity and edge parity, the edges on the right are always labeled before the edges on the left. Finally, $Y$ is the last set containing pendant edges to be labeled, and $Y$ is labeled before any path has been labeled completely. Altogether, the above observations, along with the appropriate indexing of individual paths, can be used as conditions to define an ordering nearly identical to the one given in the labeling scheme. The ordering of the labeling scheme differs only in that at least 1 and up to 4 edges have been promoted to the end of the ordering, so that they receive the last few labels.

A key idea in Labeling A is that the labels are assigned in an {\it alternating} and {\it parallel} way. 
In Phase I, we label about half of the edges in $G$. When two consecutive labels are  assigned on a path they are assigned to two edges of the same parity. That is, there is a {\it gap edge} between these two edges. We call this an {\it  alternating order}. In Phase II we fill up these gap edges, following the same order as in Phase I. This  alternating property is also kept for the  edges in $Z$ in Phase III.  
Due to the alternating and parallel order, it is clear that all vertices of degree 2 have distinct sums.  

Now we prove (i). By \Href{big-one}, it is enough to consider $|Y| \leq 1$; otherwise, since $u$ has at least two legs of length longer than 1, $|Y| \geq 2$ implies $\deg(u) \geq 4$. We could then remove $|Y|-1$ of these pendant edges via \Href{big-one} as $u$ will remain the vertex of maximal degree after the removals (any other vertex has degree at most 2).

\medskip
\noindent
{\bf Case 1.} $Y=\emptyset$.  Apply Labeling A to $\S$. Observe that all pendant edges are labeled in Phase I with distinct labels, and so  $\phi(w') < \phi(w)$ for all $\deg(w')=1$  and $\deg(w) \geq 2$. 
Because Labeling A is parallel and alternating, 
all degree-2 vertices have distinct sums. 

Next we claim $\phi(v) = \max\{\phi(w): \deg(w) = 2\}$. Assume $\deg(w)=2$ and $w  \neq v$. 
Then $w$ is either incident to an edge  labeled in Phases I and another in Phase II, or one labeled in Phase II and the other in  Phase III.  To prove $\phi(w) < \phi(v)$ it is enough to consider the latter case, which occurs when $w$ is incident to an edge of $Z$. The  other edge incident to $w$ belongs to $X_{2}$, $\LEO$, or $\REO$. For the first two cases, the edge of $\R$ incident to $v$ shall be labeled after the Phase II edge incident to $w$; since the other edge incident to $v$ has label $m$, the largest label, we conclude $\phi(v) > \phi(w)$. For the third case, just observe that $v$ will receive two Phase III labels, and one of the labels is $m$, the largest label.

Now we show that $\phi(u) > \phi(v)$. Since $|Y|=0$, by the labeling scheme, it is enough to consider the case that $\deg(u)=3$ and $\LE = \emptyset$. Otherwise, at least three edges incident to $u$ are  labeled in Phases II or III, from which it is easily seen that  $\phi(u) > \phi(v)$, as $\phi(v) \leq 2m- 1$; and if 
$|\LE|=|\LO|=1$ or $|\LE| \geq 2$, then $\phi(u) \geq 2m$.

If $x$ is even and $\R$ is even (that is, $\R=\RE_1$), then $\phi(u) \geq (m-1) + 3  +  m-2- \frac{|\RE_1|}{2} = 2m - \frac{|\RE_1|}{2}$ and $\phi(v) = m + (m-1 -\frac{\RE_1}{2})   < \phi(u)$.   Assume $x$ is odd, or $x$ is even with $\R=\RO_1$. 
In the former case, $\phi(u) \geq (m-1) + (m-2) + 3 =2m > 2m-1 \geq \phi(v)$.  For the latter case, $\phi(u) \geq (m-1)+(m-3) + 4=2m > \phi(v)$. 

\medskip

\noindent
{\bf Case 2.} $|Y|=1$. By  \Href{big-one}, it is  enough to consider that  $\deg(u)=3$ (if $\deg(u) \geq 4$, then in $G-e^*$, $u$ is the only vertex of degree 3).  That is, the left side contains exactly one path of length longer than one, denoted by $\L$ (which is $\LE_1$ or $\LO_1$).  Denote  $par(\L)$-$par(X)$-$par(\R)$ the parity of the left path $\L$, the middle path $X$, and the right path $\R$, respectively. For instance, even-odd-even indicates that the left, middle, and right paths are even, odd, and even, respectively.  

We apply Labeling A to $\S$, with one exception, for the even-even-odd case we swap the labeling order of the last edge to be labeled in $\RO_1$ (that is, $e_1$ in $\RO_1$)  with the last edge in $\LEE_1$ in the ordering.  

To prove the result,  similar to Case 1, it is enough to show that $\phi(u) > \phi(v) > \phi(w)$,  if $\deg(w) \leq 2$. One can easily verify that $\phi(v) >\phi(w)$ if $\deg(w) \leq 2$. We now claim $\phi(u) > \phi(v)$ by listing the eight possibilities in the table below. Note that if $x=1$ then $\phi(u) > \phi(v)$ as both $u$ and $v$ are adjacent to the edge labeled by $m$.  In the table, we denote $l$ and $r$ the floors of the half lengths of the left and right paths, respectively. 
$$
\small
\begin{array}{|c|c|c|}\hline
\mbox{$\L$-$X$-$\R$} &\phi(u) &\phi(v) \\ \hline 
\mbox{even-even-even} &2m +  (m-1)/2 - l - r-2 & 2m - l - r \\ \hline 
\mbox{even-even-odd} &2m+(m/2)-l-4 &2m-2\\ \hline
  \mbox{odd-even-even} &2m + (m/2) - r-4 & 2m-r-1\\ \hline 
   \mbox{odd-even-odd} &2m + (m-11)/2 \geq 2m-1 \ (\because m \geq 9) & 2m-2\\ \hline 
\mbox{even-odd-even, $x \geq 3$} &2m+(m/2) - 4 &2m-l-r-1\\ \hline
\mbox{even-odd-odd, $x \geq 3$} &2m+(m-11)/2 \geq 2m-1 \ (\because m \geq 9)  &2m-2\\ \hline
\mbox{odd-odd-even, $x \geq 3$} & 2m+(m-9)/2 \geq 2m-2 &2m-r-2 \leq 2m-3\\ \hline 
\mbox{odd-odd-odd, $x \geq 3$} &2m+(m/2) - 5 \geq 2m-2 &2m-3\\ \hline
\end{array}
$$

Next we prove (ii). Assume $\deg(u) = 4$ and $\S$ has only one leg longer than one (on which $v$ resides). That is, all left paths are pendant edges,  $|Y|=3$. 
Apply Labeling A to $\S$. 
Let $e^{\prime}$ be the edge of the right path that is incident to $v$. Let $y_{1}, y_{2}, y_{3}$ be the pendant edges incident to $u$.
Then $\phi(u) = f(e_{1}) + f(y_{1}) + f(y_{2}) + f(y_{3})$ and $\phi(v) = m + f(e^{\prime})$.
Let $x^{\prime} = \lfloor{\frac{|X|}{2}}\rfloor$ and let $r = \lfloor{\frac{|\R|}{2}}\rfloor$.
Note that if $x^{\prime}=0$, then $f(e_{1}) = m$, $f(e^{\prime}) \leq m-1$, and the three pendant edges have labels whose sum exceeds $m$. Thus, we will have $\phi(u) > \phi(v)$.

Now we may assume that $x^{\prime} \geq 1$ and $r \geq 1$. We have $f(y_{1}) + f(y_{2}) + f(y_{3}) = 3(x^{\prime} + r + 1)$.
We may re-express this sum as $x^{\prime} + r + m - i_{X} - i_{\R}$, where each of $i_{X}$ and $i_{\R}$ is 1 if its corresponding path is odd, 0 otherwise. 
Then we note $f(e^{\prime}) \leq f(e_{1}) + 1 - 2i_{X}$. So $\phi(v) \leq m + f(e_{1}) + 1 -  2i_{X}$. Subtracting our upper bound for $\phi(v)$  from $\phi(u)$ yields $\phi(u)-\phi(v) \geq x^{\prime} + r + i_{X} - i_{\R} - 1$. We note that if $x^{\prime}+r \geq 3$, we are done. Otherwise, $x^{\prime}+r = 2$, and $i_{X} - i_{\R} \geq 0$ holds, except the case of $|X|=2$ and $|\R|=3$, for which a labeling given in \Href{8} (a). 
Hence, in any case, we have $\phi(u) - \phi(v) \geq 1$. 

Thus, the proof for \Href{spider-ext} is complete. 
\hfill$\blacksquare$

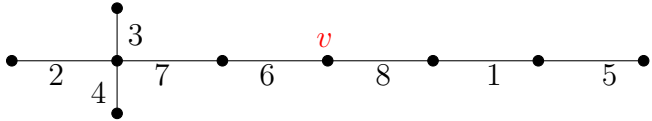
\begin{figure}[h]
\begin{tikzpicture}[scale = 0.7]
\draw (0,0) -- (12,0); 
\draw (2,1) -- (2,-1);
	\foreach \i in {0,2,4,6,8,10,12}
	{
		    \draw[fill=black] (0+\i, 0) circle (0.1);
	}
        \draw[fill=black] (2, 1) circle (0.1);
       \draw[fill=black] (2, -1) circle (0.1);
	\node [left,red] at (6.3, 0.4) {$v$};
	\node [right] at (11, -0.27) {$5$};
		\node [right] at (8.8, -0.27) {$1$};
		\node [right] at (6.7, -0.27) {$8$};
		\node [right] at (4.5, -0.27) {$6$};
	\node [right] at (2.5, -0.27) {$7$};
		\node [left] at (1.2, -0.27) {$2$};	
		\node [right] at (2, 0.5) {$3$};
		\node [left] at (2, -0.6) {$4$};

\end{tikzpicture}
\caption {\small{ $\phi(v)=14= \max\{\phi(w): \deg(w)=2\}$.}} 
\label{8}
\end{figure}


\medskip
\noindent
{\it Proof of \Href{a3})} Let $m$ be the number of edges in $\SS$. Assume (i) holds. 
Denote $Q$ a left leg of length at least two, and denote the two pendant edges incident to $v$ by $q, q'$. We apply Labeling A with two modifications. First, we label $q$ and $q^{\prime}$ at the end of Phase I, $f(q')=f(q)+1$, right after labeling $Y$.
Before describing the second modification, let us recall that Labeling A is defined by a linear ordering of the edges in which a single left odd edge incident to $u$ is removed from its natural place in the ordering with the other left odd edges and placed near the end of the ordering, succeeded only by one or two edges on the middle path.  Here, letting $a$ be the number of left odd paths with length greater than one, we instead move $min\{2, a\}$ left odd edges incident to $u$ to the end of the ordering so that only one or two middle path edges succeed them.
Here, letting $a$ be the number of left odd paths with length greater than one, we instead move $min\{2, a\}$ edges that are on left odd paths and incident to $u$ to the end of the ordering so that only one or two middle path edges succeed them.

To show that the modified labeling scheme A  produces a strongly antimagic labeling, it suffices to consider the case that $\deg(u)=4$ and prove that  
$\phi(u) > \phi(v) > \phi(w)$ holds for every degree-2 vertex $w$.  First we show $\phi(u) > \phi(v)$. 
We know that $\phi(v) = m + f(q) + f(q^{\prime})$. We make two simple observations for $u$: first, an edge incident to $u$ will receive the label $m-1$, and, second, there will exist another edge incident to $u$ whose label is at least $f(q^{\prime}) + 1$.
Thus, we only need that the other two labels on edges incident to $u$ sum to at least $f(q)+1$. If either of the left legs other than $Q$ has length greater than 1, the corresponding edge incident to $u$ will have a label strictly greater than $f(q) + 1$. This occurs regardless of the parity of the path length because we have modified the labeling scheme to save two odd edges incident to $u$ when possible.  Thus, we need only consider the case where both of the left legs other than $Q$ are pendant edges. In this case, their labels sum to $(f(q)-2) + (f(q)-1) = 2f(q) - 3$. We note that $2f(q) - 3 \geq f(q) + 1$ so long as $f(q) \geq 4$. But this always holds since at least 3 edges are always labeled before $q$.

Next we show that $\phi(v) > \phi(w)$ holds for any degree-2 vertex $w$. Since the maximal $\phi$-value of any vertex having degree-2 occurs at the vertex $w_{x-1}$ of the middle path $X$, we need only verify that $\phi(w_{x-1}) < \phi(v)$. Recall $\phi(v) = m + f(q) + f(q^{\prime})$. We also note that $\phi(w_{x-1}) = m + f(e_{x-1})$. Let $A$ be the set of edges labeled before $q$. Let $B$ be the set of edges labeled after $q^{\prime}$ and before $e_{x-1}$. Then $f(q) = |A| + 1$ and $f(e_{x-1}) = f(q^{\prime}) + |B| + 1$. Thus, to show that $\phi(v) > \phi(w_{x-1})$, it will suffice for us to show that $|A| > |B|$. We note that $A$ and $B$ are disjoint sets and neither contains $q$ or $q^{\prime}$. Let $P$, $Q$, and $S$ be the paths incident to $u$. We choose $Q$ and $P$ to be odd paths of length at least 3 if possible, giving priority to $Q$ if only one such path exists. We  compare the cardinalities of the sets $A$ and $B$ by examining their intersections with $Q$, $P$, $S$, and $X$. 
Note that $A = (Q \cap A) \dot{\sqcup} (P \cap A) \dot{\sqcup} (X \cap A) \dot{\sqcup} (S \cap A)$ and $B = (Q \cap B) \dot{\sqcup} (P \cap B) \dot{\sqcup} (X \cap B) \dot{\sqcup} (S \cap B)$. 
We make the following observations: 
$|Q \cap A| - |Q \cap B| \geq 0$, $|P \cap A| - |P \cap B| \geq 0$, $|S \cap A| - |S \cap B| \geq 1$, and $|X \cap A| - |X \cap B| \geq 0$. 
Combining the estimates yields $|A| - |B| > 0$.  Hence, $\phi(v) > \phi(w)$ for all degree-2 vertex $w$.

Next, assume (ii) holds.  First suppose $v$ is adjacent to exactly one leaf. Denote the two pendant edges incident to $u$ by $e', e''$ and the pendant edge incident to $v$ by $q$. Then $\SS - \{e', e'', q\}$ is a  path, denoted by $
P: u=w_0, w_1, w_2, \dots, w_k,
$ 
where $v=w_t$ for some $1 \leq t \leq k-2$. Denote the edges of $P$ by $e_i=w_{i-1}w_i$, $1 \leq i \leq k$. In each of the following cases, we define a labeling of $\SS$ by giving  an ordering of edges and assigning the labels $[1,m]$ according to the ordering, one by one:

If $k$ is even, define 
$$
\begin{array}{lll}
&\mbox{If $t > k/2$:} &(e_{2}, e_{4}, e_{6},  \dots, e_{k}, e', e'', q, e_{1}, e_{3}, e_5,  \dots, e_{k-1}). \\
&\mbox{If $t \leq k/2$:}  &(e_{k}, e_{k-2}, e_{k-4},  \dots,  e_{2}, q, e',  e'',   e_{k-1},  e_{k-3}, \dots, e_{1}). 
\end{array}
$$
We claim that each of the above gives a strongly antimagic labeling. If $t > k/2$, the labeling gives: 
$$
\phi(w) = \left\{ 
\begin{array}{lll} 
\frac{k}{2} + i + 4 \leq \frac{3k}{2}+3 &w=w_i \ \mbox{for some} \ i \in [k-1] \setminus \{t\},\\
k + t + 7 &w=v=w_t,\\ 
\frac{3k}{2}+7 &w=u.
\end{array}
\right.
$$
As $t > k/2$, one can easily see that $f$ is a strongly antimagic labeling for $\SS$.

The case for $t \leq k/2$ can be similarly proved, as the labeling gives:
$$
\phi(w) = \left\{ 
\begin{array}{lll} 
k/2 + 4 -i \leq (3k/2)+ 3 &w=w_i \ \mbox{for some} \ i \in [k-1] \setminus \{t\},\\
2k + 5 - t \geq (3k/2) + 5 &w=v=v_t,\\ 
2k+8 &w=u.
\end{array}
\right.
$$
Therefore, $f$ is a strongly antimagic labeling for $\SS$. 

Suppose $k$ is odd. If $t$ is odd, $1 \leq t \leq k-2$ and $t \neq (k-1)/2$,  define the ordering by: 
$$
(e_{k}, e_{k-2},  \dots,  e_{t+2}, e_{t-1}, e_{t-3}, \dots, e_{2}, q,  
e',  e'',  e_{k-1},  e_{k-3}, \dots, e_{t+1}, e_{t}, e_{t-2}, \dots, e_{1}).  
$$
For the special case when $t=(k-1)/2$ is odd,  we swap the edges $e'$ with $q$ in the above ordering. 

Now we show that the orderings above give strongly antimagic labelings.  If $t \neq (k-1)/2$, for vertices $w_i$, $i \in [k] \setminus \{t\}$, we have 
$ 
\phi(w)  \leq  (3k+5)/2,  
$
and they all have distinct sums. 
For $u, v$,  
$\phi(v) = \phi(v_t) = 2k+7 + (k-2t-1)/2
$ and 
$\phi(u) = 2k+7$.
Therefore, $f$ is a strongly antimagic labeling for $\SS$. For the case that $t=(k-1)/2$, due to the swapping of the edges $e'$ and $q$, we obtain $\phi(u)=2k+ 6$ and $\phi(v)=2k+8$. Hence $f$ is a strongly antimagic labeling for $\SS$. 

If $t$ is even and $6 \leq t \leq k-3$, we use the following ordering: 
$$ 
(e_{k}, e_{k-2},   \ldots,  e_{t+3}, e_{t-2},  \ldots, e_{2},  
q, e',  e'',  e_{k-1},  e_{k-3}, \ldots, e_{t+2}, e_{t-1}, \ldots, e_{1}, e_{t+1}, e_{t}).
$$
By calculation we obtain: 
For $w_i$, $i \in [k-1] \setminus \{t\}$,  
$\phi(w) \leq 2k - (t/2) + 5$, and they all have distinct sums.  
For $v=v_t$ and $u$, 
$
\phi(v)=2k+3 + (k+3)/2  
$ and 
$\phi(u) = 2k+3$. Thus $f$ is a strongly antimagic labeling for $\SS$.  

For $t=2,4$, we use the ordering:
$ 
(e_{k}, e_{k-2},   \dots, e_{1},  
e',  e'',  q, e_{k-1},  e_{k-3}, \dots, e_{2}).
$
Similarly, one can verify that $f$ is a strongly antimagic labeling.  
 
Finally, assume $v$ is incident to two pendant edges, $q, q'$. Denote the pendant edges incident to $u$ by $e', e''$, and the path connecting $u$ and $v$ by 
$ 
P: u=w_0, w_1, w_2, \dots, w_k=v;$ where edges of $P$ are  denoted by $e_i=w_{i-1}w_{i}$, $1 \leq i \leq k$. The labeling is by the following order: 
\begin{equation}
e_2, \ e_4, \ \dots, \ e_{2 \lfloor \frac{k}{2} \rfloor}, \ e', \ e'', \ q, \ q', \ e_1, \ e_3, \ \dots, \ e_{2 \lfloor \frac{k}{2}  \rfloor +1}, 
\label{double leaves}
\end{equation}
where the last term  
exists only when $k$ is odd. It is straightforward to check the above ordering provides a strongly antimagic labeling for $\SS$. Thus, the proof for \Href{a3} is complete. 
\hfill$\blacksquare$



\medskip

\noindent
{\it Proof of \Href{2 types} (b).}
Let $\SS$ be a double spider with $\deg(u)= \deg(v)+1 \geq 4$, where $u$ is adjacent to at least one leaf, but $v$ is not adjacent to any leaf. In the following we find a strongly antimagic labeling for $\SS$ by considering two cases:  

\noindent
{\bf Case 1.} Assume $v$ is incident to only odd paths of length at least 3. Apply Labeling A to $\SS$. Note that all edges incident to $v$ from the right paths, except $e_1$ in $\RO_1$, are labeled in Phase I,  before any edge incident to $u$ in the left side is labeled. 
See \Href{big A} for an example.

\begin{figure}[h]
\centering
\small
\begin{tikzpicture}[scale = 0.6]
\draw (-2,0) -- (20,0); 
\draw (0,0) -- (0,2);
\draw (0,0) -- (2,2);
\draw (10,0) -- (16,2);
\draw[fill=black] (-2,0) circle (0.15);
\draw[fill=black] (0,0) circle (0.15);
\draw[fill=black] (2,0) circle (0.15);
\draw[fill=black] (4,0) circle (0.15);
\draw[fill=black] (6,0) circle (0.15); 
\draw[fill=black] (8,0) circle (0.15);
\draw[fill=black] (10,0) circle (0.15);
\draw[fill=black] (12,0) circle (0.15);
\draw[fill=black] (14,0) circle (0.15);
\draw[fill=black] (16,0) circle (0.15);
\draw[fill=black] (18,0) circle (0.15);
\draw[fill=black] (20,0) circle (0.15);
\draw[fill=black] (0,2) circle (0.15); 
  \draw[fill=black] (2,2) circle (0.15);
   \draw[fill=black] (16, 2) circle (0.15);
     \draw[fill=black] (14,1.4) circle (0.15);
  \draw[fill=black] (12,0.7) circle (0.15);

\node [left] at (-1, 0.3) {$8$};
\node [right] at (-0.8, 1) {$7$};
		\node [right] at (0.4, 1.2) {$6$};
		\node [left] at (1.3, -0.5) {$15$};
	\node [right] at (8.5, -0.5) {$16$};	
	\node [right] at (2.7, 0.5) {$12$};
	\draw (3.3,0.5) circle (0.4); 
\node [right] at (6.7, 0.5) {$13$};
	\draw (7.3,0.5) circle (0.4); 
	\node [right] at (4.8, -0.5) {$5$};
		\node [right] at (10.5, -0.5) {$2$};	
		\node [right] at (14.5, -0.5) {$3$};
		\node [right] at (18.5, -0.5) {$4$};
	\node [right] at (14.5, 2) {$1$}; 
		\node [right] at (10.5, 1) {$14$}; 
		\node [right] at (12.7, 1.5) {$9$}; 
		\node [right] at (12.5, -0.5) {$10$}; 
		\node [right] at (16.5, -0.5) {$11$}; 
		
\draw (0.8,-0.5) circle (0.4); 
\draw (0.8,-0.5) circle (0.5); 
\draw (9,-0.5) circle (0.4); 
\draw (9,-0.5) circle (0.5); 
\draw (11,1) circle (0.4); 
\draw (11,1) circle (0.5); 
\draw (13,1.5) circle (0.4); 
\draw (13.1,-0.5) circle (0.4); 
\draw (17.05,-0.6) circle (0.4);
\end{tikzpicture}
\caption {\small{Labeling A applied to $\SS$, $r=1$, $s=x'=2$, $|Y|=3$. The circled and double circled numbers are labeled in Phases II and III, respectively; the rest are labeled in Phase I.}}
\label{big A}
\end{figure}

For the proof, we may assume $\deg(u) = \deg(v) + 1 = 4$ and that $u$ is incident to three pendant edges.
We let $x^{\prime}, r,$ and $s$ be the floors of half the lengths of the middle path, the right odd path with a saved edge $\RO_1$, and the right odd path without a saved edge $\RO_2$, respectively. Thus, we may decompose $m$, the total number of edges, as $m = 2(x^{\prime}+r+s) + 5 + i_{X}$ where $i_{X}$ will be $1$ if $|X|$ is odd, otherwise $i_{X}$ is $0$.

The smallest label on a pendant edge incident to $u$ is at least $r+x^{\prime}+s+1$. The two larger labels on pendant edges incident to $u$ sum to at least $m-i_{X}$. Let $e_{1}$ be the middle path edge incident to $u$. Then $\phi(u) \geq m + x^{\prime} + r + s + f(e_{1}) + 1 - i_{X}$.

Now observe that two of the labels incident to $v$ will always be $m$ and $r+1$. The remaining label may be expressed as $f(e_{1}) + 1 - 2i_{X}$. Thus, $\phi(v) = m + r + f(e_{1}) + 2 - i_{X}$. Now we subtract $\phi(v)$ from our lower bound on $\phi(u)$ to obtain
$\phi(u) - \phi(v) \geq x^{\prime} + s + i_{X} - 1$. Note $s \geq 1$. If $x^{\prime}+s \geq 2$, we are done. If $x^{\prime}+s = 1$, then $x^{\prime} = 0$ but $i_{X} = 1$, so $s+i_{X} = 2$, and the inequality still holds. Thus, in any case, $\phi(u) - \phi(v) \geq 1$.

\medskip

\noindent
{\bf Case 2.} Assume $v$ is incident to at  least one even path, that is $|\RE| \geq 1$. We shall modify Labeling A. The goal is to keep the labels small for the edges incident to $v$ from $\RE$ while maintaining $\phi(u) > \phi(v) > \phi(w)$ for $w$ with $\deg(w) \leq 2$.  To this end, we begin with some special notations.

Denote $a=|\RO|$, $b=|\RE|$, $c = |\LO|$, $d=|\LE|$, the pendant edges incident to $u$ by $Y=\{q_1, q_2, \dots, q_y\}$, and the even paths on the right side by  $ 
\RE_1, \RE_2, \dots, \RE_{b}, $ 
where $|\RE_1| \leq |\RE_2| \dots \leq |\RE_b|$. For each $i$, denote $\RE_i$:  $v=w^i_0, w^i_1, w^i_2,  \dots, w^i_{|\RE_i|}$ with edges  $e^i_{j}=w^i_{j-1}w^i_{j}$.  
By our assumptions, every right odd path has length at least 3, and $b \geq 1$. 
Let $b_2$ be the number of $\RE$ paths of length 2. That is, if $b_2 \geq 1$, then  $|\RE_1| = |\RE_2| = \dots = |\RE_{b_2}|=2$.  Define $\alpha = \max\{0, b-1-(c+d)\}$ and $\beta=\min\{\alpha, b_2\}$.  
If $\beta > 0$, then $\alpha \geq \beta >0$, and there are $\beta$ paths of length 2 on the right side, $\RE_1, \RE_2, \ldots, \RE_{\beta}$. Moreover, as $c+d + y = a+b+1$, $\alpha = b-1-c-d \geq \beta$ and $a \geq 0$, it holds that $y \geq \alpha +2 \geq \beta + 2$.

We now define Labeling B. Besides the right paths introduced in the above, other notations are the same as the ones used in Labeling A, with the following exception:

\begin{center}
\fbox{\begin{minipage}{38 em}\small 
When $x$ is even, $a=c=d=\beta=0$, and $b=2$, the edges of $X$ are ordered reversely. That is,  $$
X = (v=w^X_0, w^X_1, w^X_2, \dots, w^X_x = u), \eqno(*)
$$
with edges $e^X_i=w_{i-1}w_i$, $1 \leq i \leq x$. 
With this reversed order, $X_1$ and $X_2$ are defined the same as in Labeling A.\end{minipage}}
\end{center}

\noindent 
Similar to Labeling A, Labeling B is defined by  three parallel and alternating phases. 

\newpage
\medskip
\noindent
\fbox{{\bf Labeling B}}

\medskip

\noindent
{\bf Phase I}
$$
\small
\begin{array}{lll}
(1)  \ \ e^1_1, q_1, e^2_1, q_2, e^3_1, \dots,  q_{\beta-1},  e^{\beta}_1 \ &\mbox{(if $\beta > 0$)}\\
(2) \ \ \mbox{even edges from the fourth edge of each $\RE_{\beta+1}$, $\ldots$, $\RE_{b-1}$} &\mbox{(if $\beta < b-1$)} \\
(3) \ \ \mbox{$\REE_{\alpha+1} \to  \REE_{\alpha+2} \to \ldots \to  \REE_{b-1}$} &\mbox{(if $\alpha < b-1$)} \\
(4) \ \ \ROO \ \ \mbox{(that is, from the first edge in $\RO_1$ if exists)} \\
(5) \ \ \LOO^* \  \ \mbox{(skipping the last edge on {\bf every} path of  $\LO$)} \\
(6) \ \ X_1 \\ 
(7) \ \ \mbox{even edges on $\RE_b$,  starting with $e^b_ 2$} \\ 
(8) \ \  \LOE \\ 
(9) \ \  e^{(\beta+1)}_1, e^{(\beta+2)}_1, \ldots, e^{\alpha}_1 \ \ \mbox{(first edges of $\RE_{\beta+1}$, $\ldots$, $\RE_{\alpha}$)} &\mbox{(if $\beta < b-1$)}\\ 
(10) \ \  \mbox{the remaining edges in $Y$} 
\end{array} 
$$
{\bf Phase II}
$$
\small
\begin{array}{ll} 
(1)' \ \  e^{\beta}_2,  e^{\beta-1}_2, \ldots, e^{1}_2 \  \mbox{(note, the order is reserved from (1))} &\mbox{(if $\beta > 0$)} \\
(2)' \ \  \mbox{odd edges from the third edge of $\RE_{\beta+1}$, $\RE_{\beta+ 1}$, $\ldots$, $\RE_{\alpha}$}  &\mbox{(if $\beta < b-1$)}\\
(3)' \ \  \mbox{$\ROE_{\alpha+1} \to  \ROE_{\alpha+2} \to \ldots \to  \ROE_{b-1} $} &\mbox{(if $\alpha < b-1$)}\\ 
(4)' \ \  \RO(\E) \\
(5)' \ \ \LO(\E)  \\
(6)' \ \   X_2 \\ 
(7)'  \ \  \mbox{odd edges on $\RE_b$, starting with  $e^{b}_1$} \\
(8)' \ \ \LEE\\ 
(9)' \ \  e^{(\beta+1)}_2, e^{(\beta+2)}_2, \ldots, e^{\alpha}_2  \  \mbox{(second edge of $\RE_{\beta+1}$, $\ldots$, $\RE_{\alpha}$)} &\mbox{(if $\beta < b-1$)}\\
\end{array} 
$$  
\noindent
{\bf Phase III}  
$$
\small 
\begin{array}{llll}
 (11) \ &\mbox{the last edge on every $\LO_1$, $\LO_2$, $\ldots$, $\LO_c$} \\ 
 (12) \  &\mbox{$Z$ \ \ \ (final edges, where $1 \leq |Z| \leq 2$)}. \hspace{2.5in}
\end{array}
$$
Labeling B also has the parallel and alternating ordering properties as Labeling A, with two exceptions. The first exception is when $\beta >0$, then the orderings in (1) and (1)' are reversed. In this case, the degree-2 vertices in $\RE_1$, $\RE_2$, $\ldots$, $\RE_{\beta}$, are $w^i_1$, $1\leq i \leq \beta$, respectively. Assume $f(e^{\beta}_2)=h$. Then $\phi(w^i_1)=(2i-1)+(h+\beta - i)=h + \beta +i -1$, $1 \leq i \leq \beta$. Hence $\phi(w^i_1) \neq \phi(w^j_1)$, $1 \leq i < j \leq \beta$.

The second exception occurs when $\alpha > \beta$, in which, by steps (2), (2)', and (9)', the first three edges  of each $\RE_{\beta+1}$, $\RE_{\beta+2}, \ldots$, $\RE_{\alpha}$ are not  labeled with the alternating property. By the labeling scheme, we have 
$\phi(w_i^{\beta +1}) <  \phi(w_i^{\beta +2}) < \ldots <  \phi(w_i^{\alpha})$ for $i=1,2$. Thus, 
it  suffices to verify that  $\phi(w_1^{\alpha}) < \phi(w_2^{\beta+1})$.   
Assume $f(e_i^{\beta+1}) = t_i$, $1 \leq i \leq 3$.  Then  
$\phi(w_2^{\beta+1})=t_2+t_3$ and 
$\phi(w_1^{\alpha})= t_1+t_2 + 2\alpha - 2\beta -2$. Recall $y \geq \alpha +2$. By steps (8), (9), and $(2)'$, we have $t_3 \geq t_1 + \alpha - \beta + y \geq t_1 + 2\alpha - \beta + 2$.  This implies,  $\phi(w_2^{\beta+1}) =t_2 + t_3 \geq t_1+t_2 + 2\alpha -\beta +2 > t_1+t_2 +2\alpha-2\beta -2=\phi(w_1^{\alpha})$.   

We now conclude that all vertices in $V_1 \cup V_2$ have distinct sums, and $\phi(s) < \phi(s')$ if $s \in V_1$ and $s' \in V_2$.  It remains to show that 
$\phi(u) > \phi(v) > \phi(w)$ holds for any $w \in V_2$. Observe the following:  

\medskip
 
\noindent
The edges incident to $v$ are labeled in the following steps:

(1) \  $\beta$ edges, if $\beta >0$.  


(4) \  $a$ edges, if $a \geq 1$. 

(9) \  $\alpha - \beta$ edges, if $\alpha > \beta$. 

$(3)'$ \  $b - \alpha-1$ edges, if $\alpha < b-1$.

$(6)'$ \ one edge, $e_{1}$ on $X$, if it is  $(*)$: $x$ is even,  $a=\beta=0$, and $b=2$.

$(7)'$ \ one edge, $e^b_{1}$.

(12) \  one edge, $e_x$, labeled with $m$, except for ($*$): $x$ is even, $a=\beta=0$ and $b=2$. 

\medskip

\noindent
The edges incident to $u$ are labeled in the following steps:

(1) \ $\beta-1$ edges, if $\beta >0$.  

(10) \ $y - \beta+1$ edges. 

$(6)'$ \ one edge $e_1$ on $X$, if $x$ is even, and $a \neq 0$ or $b \geq 3$.  

\hspace{0.6in} (note, when $\beta > 0$, then $\alpha > 0$, implying $b \geq 2$). 

$(8)'$ \ $d$ edges, if $d \geq 1$. 

(11) \ $c$ edges, if $c \geq 1$.

(12) \ one edge on $X$, \ 
$
\left\{ 
\begin{array}{lll} 
e_1 &\mbox{if $x$ is odd, labeled with $m-1$}; \\ 
e_x &\mbox{if ($*$) holds: $x$ is even, $a = \beta=0$ and $b=2$, labeled with $m$}.
\end{array}
\right. 
$

\medskip
\noindent
We now prove $\phi(u) > \phi(v)$. By the above  observation, it is enough to consider that $a=c=d=0$, since the $a$ edges incident to $v$ are labeled early on in step (4), while the $y$ edges, the $c$, and the $d$ edges incident to $u$ are labeled later at steps (10), $(8)'$ and (11), respectively. Thus, assume $u$ is adjacent to leaves only and $\alpha=b-1=y-2 \geq 1$.   

Denote the edges incident to $v$ by $t_1, t_2, \ldots, t_{b+1}$ where $f(t_i) < f(t_{i+1})$. Similarly, denote the edges incident to $u$ by $k_1, k_2, \ldots, k_{b+2}$ where $f(k_i) < f(k_{i+1})$. Note, $t_{b+1}$ and $k_{b+2}$ are edges on $X$. Observe:
\begin{itemize}\small
    \item By the end of step (10), $t_1, t_2, \ldots, t_{b-1}$ and $k_1, k_2, \ldots, k_{b+1}$ are labeled, 
    where $f(t_i) < f(k_i)$ for $1 \leq i \leq b-1$. If $\beta > 0$, because at least one label is assigned in step (7), we get $f(t_{\beta})=2 \beta - 1 \leq f(k_{\beta}) - 2$. Thus,  
   
   $
    \sum\limits_{i=1}^{b-1} [ f(k_i) - f(t_i)]
    \geq 
    \left\{
    \begin{array}{llll}
    b &\mbox{if $\beta >0$} \\
    b-1 &\mbox{otherwise.}  
\end{array} 
\right.  
    $
    
Moreover, $f(k_b)$ and $f(k_{b+1})$ are the last assigned in step (10). By the end of step (10), exactly $y + \lfloor \frac{m-y-2}{2} \rfloor= \lfloor \frac{m+y-2}{2} \rfloor$ edges are labeled. As $m+y \equiv x$ (mod 2), we have 

    $
    f(k_{b}) + f(k_{b+1}) =  
    \left\{
    \begin{array}{llll}
    m+y-3 &\geq m &\mbox{if $x$ is even;}\\
m+y-4 &\geq m-1 &\mbox{if $x$ is odd.}  
\end{array} 
    \right.
    $
  \item   
$
f(k_{b+2})= 
\left\{\begin{array}{lll}
m-1 &\mbox{if $x$ is odd (labeled in step (12)),}\\
m &\mbox{if $x$ is even, $a = \beta=0$ and $b=2$ (labeled in step (12)),}\\
f(t_{b}) -1 &\mbox{otherwise (labeled in steps $(6)'$).} 
\end{array}
\right. 
\hspace{1in}
$
\item 
$
f(t_{b+1}) =  
\left\{
\begin{array}{llll}
   m - 2 - \frac{|\RE_b|}{2} &\mbox{if $x$ is even, $a = \beta=0$ and $b=2$,}  \\
   m &\mbox{otherwise.}   
\end{array}  
\right.
$
\end{itemize} 
We  sum up the above to show that $\phi(u) > \phi(v)$. 
If $x$ is odd, 
then $f(t_b) \leq m-2$, so $\phi(u) - \phi(v) \geq (b-1) + (m-1) + (m-1) - (m-2) - m = b-1 \geq 1$. If $x$ is even and $\beta > 0$, then $\phi(u) - \phi(v) \geq b + m + f(t_b) -1 - f(t_b) - m = b-1 \geq 1$. If $x$ is even and $\beta=0$. Recall $b \geq 2$,  
so $\phi(u) - \phi(v) \geq b-1 + m + m - f(t_b) - (m -2 - \frac{|\RE_b|}{2}) = b + 1 + m -f(t_b) + \frac{|\RE_b|}{2} > 0$. 

Next we prove $\phi(v) > \max \{\phi(w):  w \in V_2 \}$. It can be  easily seen true when $x=1$. Assume $x \geq 2$. Except the ($*$) case,  
the maximum value of $\phi(w)$ for  $w \in V_2$ occurs at $w^X_{x-1}$ on $X$, in which $\phi(w)= m + f(e^X_{x-1})$. By our labeling scheme, $e_{x-1}$ is labeled at step $(6)'$, before the first edge of $\RE_b$ which is labeled in step $(7)'$. Hence $\phi(v) > \phi(w)$.   

It remains to consider the ($*$) case when $x$ is even, $a = \beta=0$ and $b=2$, for which the maximum $\phi(w)$ of $w\in V_2$ occurs at $w=w^X_{x-1}$, the vertex on $X$ that is adjacent to $u$. Note, $\phi(w^X_{x-1})=m + f(e^X_{x-1})$. On the other hand, $\phi(v) = f(e^X_1)+f(e^1_1) + f(e_1^2)$. By ($*$), $f(e^X_{x-1}) < f(e^X_1)$ and $f(e^1_1) + f(e_1^2) \geq (\frac{|\RE_1|}{2} + \frac{|\RE_b|}{2}+1) + (m-\frac{|\RE_1|}{2} -1) \geq  m+2$,  as $|\RE_1| \geq 4$.  
Hence, $\phi(v) > \max\{\phi(w): w \in V_2\}$, completing the proof of \Href{2 types} (b).  \hfill$\blacksquare$

See \href{10} for examples.  



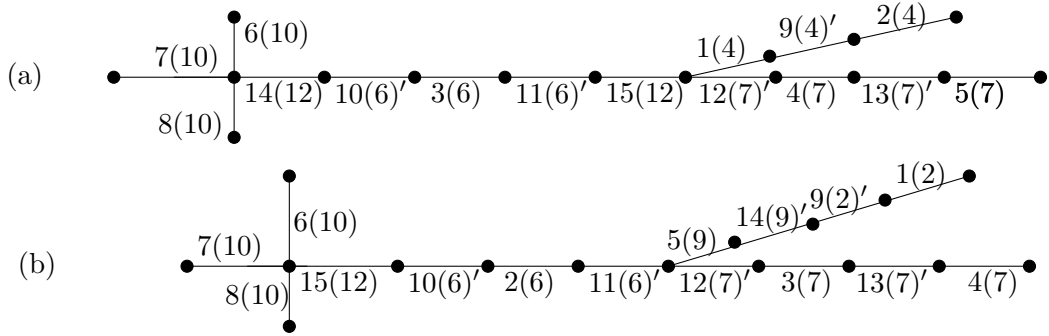
\begin{figure}[h]
\small
\centering
\begin{tikzpicture}[scale = 0.8]
\draw (-2,0) -- (13.5,0); 
\draw (0,0) -- (0,1); 
\draw (0,0) -- (0,-1);
\draw (0,0) -- (-1,0);
\draw (7.5,0) -- (12,1);
\draw[fill=black] (8.9, 0.35) circle (0.1);   \draw[fill=black] (10.3, 0.63) circle (0.1);
\draw[fill=black] (12, 1) circle (0.1);
\draw[fill=black] (9, 0) circle (0.1);    
 \draw[fill=black] (11.8, 0) circle (0.1);
\draw[fill=black] (13.4, 0) circle (0.1);
\draw[fill=black] (10.3, 0) circle (0.1);
\foreach \i in {0,...,5}{
\draw[fill=black] (\i+\i/2, 0) circle (0.1);
	}      
\draw[fill=black] (0,1) circle (0.1);
\draw[fill=black] (-2, 0) circle (0.1);
\draw[fill=black] (0, -1) circle (0.1);
\node [right] at (0, -0.3) {$14  (12)$};
\node [right] at (1.5, -0.3) {$10  (6)'$};
\node [right] at (3.1, -0.3) {$3  (6)$};	
\node [right] at (4.5, -0.3) {$11 (6)'$};
\node [right] at (6, -0.3) {$15  (12)$};
\node [right] at (0, 0.7) {$6  (10)$};	
\node [right] at (-1.5, 0.3) {$7  (10)$};	
\node [left] at (0, -0.8) {$8  (10)$};
\node [left] at (8.7, 0.45) {$1  (4)$};
\node [left] at (10.2, 0.8) {$9  (4)'$};
\node [left] at (11.7, 1) {$2  (4)$};
\node [left] at (9.1, -0.3) {$12  (7)'$};
\node [left] at (10.2, -0.3) {$4  (7)$};
\node [left] at (11.8, -0.3) {$13  (7)'$};
\node [left] at (13, -0.3) {$5  (7)$};
\node [left] at (13, -0.3) {$5  (7)$};
\node [left] at (-3, 0) {(a)};
\end{tikzpicture}


\begin{tikzpicture}[scale = 0.8]
\node [left] at (-4, 0) {(b)};
\draw (-2,0) -- (12,0); 
\draw (-0.3,0) -- (-0.3,1.5); 
\draw (-0.3,0) -- (-0.3,-1);
\draw (0,0) -- (-1,0);
\draw (6,0) -- (11,1.5);
\draw[fill=black] (7.5, 0) circle (0.1);    \draw[fill=black] (9, 0) circle (0.1);
\draw[fill=black] (10.5, 0) circle (0.1);
\draw[fill=black] (12, 0) circle (0.1);  
\draw[fill=black] (7.1, 0.4) circle (0.1);  \draw[fill=black] (8.4, 0.7) circle (0.1);
\draw[fill=black] (9.6, 1.1) circle (0.1);
\draw[fill=black] (11, 1.5) circle (0.1);
	\foreach \i in {1,...,4}{
		    \draw[fill=black] (\i+\i/2, 0) circle (0.1);
	}
\draw[fill=black] (-0.3,1.5) circle (0.1);
\draw[fill=black] (-2, 0) circle (0.1);
\draw[fill=black] (-0.3, 0) circle (0.1);
\draw[fill=black] (-0.3, -1) circle (0.1);
\node [right] at (-0.3, -0.3) {$15 (12)$};
\node [right] at (1.5, -0.3) {$10 (6)'$};
\node [right] at (3.1, -0.3) {$2   (6)$};	
\node [right] at (4.5, -0.3) {$11  (6)'$};
\node [right] at (6, -0.3) {$12 (7)'$};
\node [right] at (-0.4, 0.7) {$6 (10)$};	
\node [right] at (-2, 0.3) {$7 (10)$};	
\node [left] at (-0.1, -0.5) {$8 (10)$};
\node [left] at (7, 0.4) {$5 (9)$};
\node [left] at (8.5, 0.8) {$14 (9)'$};		
\node [left] at (9.5, 1.1) {$9 (2)'$};
\node [left] at (10.8, 1.5) {$1 (2)$};
\node [left] at (8.9, -0.3) {$3 (7)$};
\node [left] at (10.5, -0.3) {$13 (7)'$};
\node [left] at (12, -0.3) {$4 (7)$};
\end{tikzpicture}
\caption {\small{Labeling B: (a) $a=b=1$,  $c=d=0$, $y=3$, $\alpha=\beta=0$, $x=5$; (b) the special case ($*$), $a=\beta=0$, $b=2$, $x=4$, $c=d=0$,  $y=3$, and $\alpha=1$. Each number on the edge is the label while the number in the parentheses is the step that the label was assigned.}}
\label{10}
\end{figure}


Labelings A and B are modified from \cite{strongly}. 
By  \Href{euclid}, one only needs to consider those  double spiders in  \Href{2 types}, resulting in simplified proofs with less cases considered than in \cite{strongly}.  Moreover, if the order of the path $X$ is not revised for the special case ($*$) in Labeling  B as in \cite{strongly},  
direct calculation shows that $\phi(u)=\phi(v)$ when $\alpha=1$; not treating this subtlety caused an error to occur in \cite{strongly}.  For instance, in \href{10} (b), reversing the middle path $X$ gives $\phi(u)=\phi(v)= 32$. 

A {\it cycle double spider} is obtained by replacing each leg of a double spider $\SS$ by a cycle. We may also view such a graph as being obtained by attaching several cycles (of the same or different lengths) to each of the two ends $u$ and $v$ of a path.  

\begin{corollary}
\label{cycle double spider}
A cycle double spider is strongly antimagic. 
\end{corollary}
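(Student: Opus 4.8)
The plan is to exploit the fact that \Href{big-one}, \Href{add a cycle}, and \Href{add a path} nowhere require connectivity (only $\delta(G)\geq 1$, respectively the stated leaf conditions), so that the two centers of a cycle double spider may be grown \emph{independently} in two separate components and fused into a single connected graph only at the very end. View the cycle double spider as a middle path whose two endpoints are $u$ and $v$, with $a_u$ cycles attached at $u$ and $a_v$ cycles attached at $v$, so that $\deg(u)=2a_u+1$ and $\deg(v)=2a_v+1$; assume without loss of generality $\deg(u)\geq\deg(v)$, i.e.\ $a_u\geq a_v\geq 1$. The reason a disconnected construction is natural here is that, unlike for a single-center cycle spider (\Href{cycle spider}), one cannot grow the second center out of a path endpoint: a path endpoint enters the order as a bottom leaf, and a cycle cannot be attached to a leaf through \Href{add a cycle}, whose degree hypothesis fails there. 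Building each center as the top vertex of its own component avoids this entirely.

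First I would establish a seed: a strongly antimagic labeling of a disjoint union $C\sqcup C'$ of two cycles, with distinguished vertices $u\in C$ and $v\in C'$, such that $\phi(u)>\phi(v)>\phi(w)$ for every other vertex $w$. This controlled top-two ordering is obtained by concentrating the largest labels on the edges incident to $u$ and $v$ while keeping the remaining incident edges small (for short cycles this is a finite check). With the seed in hand I would: (1) attach the remaining $a_u-1$ cycles at $u$ via \Href{add a cycle}, each time with $j=|V|$ since $u$ is kept as the global maximum, reaching $\deg(u)=2a_u$; (2) attach the $a_v-1$ cycles at $v$ via \Href{add a cycle}, where now $v$ is the degree-$2$ vertex of largest $\phi$ and $u$ sits immediately above it, so the required inequality $\deg(v)\leq\deg(u)-2$ reads $2a_v-2\leq 2a_u-2$ and holds because $a_v\leq a_u$ --- and, crucially, \Href{big-one} inserts every new vertex at the bottom of the order, so $v$ stays the maximal degree-$2$ vertex throughout; (3) add a pendant edge at $u$ and at $v$ via \Href{big-one}, producing exactly two leaves and $\deg(u)=2a_u+1$, $\deg(v)=2a_v+1$; and finally (4) join these two leaves by a path via \Href{add a path}, fusing the two components into one connected cycle double spider with middle path $u$--(pendant)--$\cdots$--(pendant)--$v$.

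The main obstacle is the seed step together with the bookkeeping that keeps the degree hypothesis of \Href{add a cycle} valid at every attachment to $v$: the entire argument hinges on $v$ being, at each stage, the unique second-highest vertex of the induced order, so that its successor is precisely $u$, and on $\deg(u)\geq\deg(v)$ supplying the needed slack of $2$. Checking that the ordering is preserved --- new vertices always entering at the bottom, so that $u\succ v\succ$ everything else persists after each operation, including the borderline case $a_u=a_v$ where one still needs $\phi(u)>\phi(v)$ --- is where care is required, but each such verification reduces directly to the inequalities already proved inside \Href{big-one}. A few small configurations fall outside the generic two-pendant scheme, namely middle paths of length $1$ or $2$ and the case $a_u=a_v=1$; these I would dispatch by fusing the two components with a single connecting edge or a single extra pendant through \Href{big-one} in place of \Href{add a path}.
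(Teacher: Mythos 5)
Your construction rests entirely on the seed, and the seed does not exist in general. You need, for arbitrary cycle lengths $k,k'\geq 3$, an antimagic labeling of $C_k\sqcup C_{k'}$ (strong antimagicness is vacuous there, since every vertex has degree 2) whose two largest $\phi$-values occur in \emph{different} components, at the chosen centers $u$ and $v$; this is what makes $v$ the immediate predecessor of $u$ in the induced order, which every later application of \Href{add a cycle} and \Href{big-one} at $v$ requires. You justify the seed only by ``concentrating the largest labels\dots (for short cycles this is a finite check),'' but the seed cycles are the first cycle attached at each center, so their lengths are part of the input --- and the claim is in fact \emph{false} for small cycles. Take $C_3\sqcup C_3$: if $u$'s triangle gets labels $a<b<c$ and $v$'s gets $d<e<f$, you need $b+c>e+f>a+c$ together with distinctness of the six sums $a+b,a+c,b+c,d+e,d+f,e+f$. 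If $c=6$, then $e+f\leq 9$ forces $a\leq 2$, and the only surviving candidate is $\{1,4,6\}$ versus $\{2,3,5\}$, whose sum sets $\{5,7,10\}$ and $\{5,7,8\}$ collide; if $f=6$, then $b+c\leq 9$ forces $\{d,e\}=\{1,2\}$, giving $\{3,4,5\}$ versus $\{1,2,6\}$, where $e+f=8=a+c$ fails strictness. So no seed exists, and since a cycle double spider may have all of its cycles of length 3, your proof cannot even start on such instances. (Disjoint unions genuinely misbehave here; compare $P_3\sqcup P_3$, which is not antimagic at all.) This is exactly why the paper never passes through a disconnected intermediate graph: it first labels an honest double spider --- middle path included from the start --- via Labeling A or the ordering \eqref{double leaves} so that $\phi(u)>\phi(v)$, and only then grows cycles by \Href{big-one}, \Href{add a path}, and \Href{add a cycle}.

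A second, independent gap: middle paths of length 2 are unreachable by your fusing operations, and your proposed fix fails. In your scheme the middle path is (pendant edge at $u$) $+$ (path of length $k\geq 1$ from \Href{add a path}) $+$ (pendant edge at $v$), hence has length at least 3. Length 1 you can recover, since $u$ and $v$ are consecutive at the top of the order and \Href{big-one} with $j+1=n$ adds the edge $uv$. But for length 2 the middle vertex $w$ must become adjacent to both centers, and any vertex created by \Href{big-one} as a pendant enters the ordering at the very bottom with $\phi(w)=1$; it is therefore never consecutive with the other center, so \Href{big-one} can never supply the second edge, and \Href{add a path} cannot help since it only joins two leaves by a path of positive length. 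So ``a single extra pendant through \Href{big-one}'' does not go through, and these cases again need the paper's direct labeling of the double spider. A minor further point: your steps (1)--(2) use that \Href{add a cycle} preserves the induced ordering and inserts new vertices at the bottom; the corollary as stated only asserts strong antimagicness, so you must appeal to its proof by repeated \Href{big-one} --- the paper implicitly does the same, so this last point is presentational rather than fatal, unlike the two gaps above.
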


\begin{proof}
Consider a cycle double spider obtained from a double spider with center vertices $u$ and $v$ where the number of cycles attaching to $u$ is at least the number of cycles attaching to $v$. 
By \href{add a cycle}, we need only show that given a cycle double spider where $u$ and $v$ each has a single cycle attached, we can construct a strongly antimagic labeling $f$ such that $\phi(u) > \phi(v)$. To start, we delete one or several edges from each cycle to form a double spider $\SS$. Then we apply either \href{double leaves} or Labeling A to get a strongly antimagic labeling for $\SS$ where $\phi(u) > \phi(v)$. Finally we use  \Href{big-one} and \Href{add a path} to form the two cycles with the desired lengths.  

Let $n$ and $m$ denote the lengths of the two cycles attaching to $u$ and $v$, respectively. Assume $m=n=3$. We  delete the edge not incident to $u$ or $v$ of the $C_3$ on each side. The resulting graph is a double spider $\SS$ with $u$ and $v$ each adjacent to only two leaves and the middle path. We apply the labeling used in \href{double leaves}. As the labels of the two leaves on each side are consecutive, by  \Href{big-one}, $G$ is strongly antimagic. Note that the labeling \href{double leaves} gives $\phi(u) > \phi(v)$ when $x$ (the length of the middle path) is even, while $\phi(u) < \phi(v)$ when $x$ is odd.  In order to maintain our goal that $\phi(u) > \phi(v)$, we shall swap $u$ and $v$ if $x$ is odd. This concludes the case of $m = n = 3$. 

Next, we prove the case that $n=3$ and $m \geq 4$ (the case for $m=3$ and $n \geq 4$ can be proved similarly). We again proceed by deleting edges to obtain a double spider $\SS$ where each of $u$ and $v$ is incident only to the middle path and two pendant edges and then label $\SS$ using \href{double leaves}. If we further assume that $x$ is odd, the proof follows quickly; in this case, after swapping $u$ and $v$, by \href{double leaves} the two leaves on the left side have the largest $\phi$-values among all leaves. Hence by \Href{big-one}, the graph obtained by adding an edge connecting these two leaves is strongly antimagic. Next we apply \Href{add a path} to connect the two leaves on the $v$ side to get the desired cycle $C_m$.

Next we assume that $n=3$, $m \geq 4$, and $x$  is even. In the following, we refer to the pendant edges incident to $u$ as $e^{\prime}$ and $e^{\prime\prime}$ and those incident to $v$ as $q$ and $q^{\prime}$. Note that the ordering defined in \href{double leaves} ensures that $f(e^{\prime}) < f(e^{\prime\prime}) < f(q) < f(q^{\prime})$.  We consider separately the cases of $m=4$ and $m\geq 5$. We start with the former case and assume that $m=4$. In this case, after labeling the double spider via \href{double leaves}, we add $3$ to every existing label; the labels $1$, $2$, and $3$ are now unused. Then we add a vertex $w$ and add edges from $w$ to the two leaves adjacent to $v$. The edge linking $w$ to the leaf incident to $q^{\prime}$ receives the label $3$, and the edge linking $w$ to the leaf incident to $q$ receives the label $2$. Then we add one more edge between the two leaves adjacent to $u$ and assign this edge the label $1$. We claim the resulting labeling is strongly antimagic. 

First, among vertices whose degrees remained unchanged by the addition of the three edges, the ordering with respect to $\phi$-values is preserved and also each $\phi$-value is increased by at least $6$. On the other hand, for the vertices that were formerly leaves, their $\phi$-values increased by at most $6$. Further, as $f(e^{\prime}) < f(e^{\prime\prime}) < f(q) < f(q^{\prime})$, and the labels  $1$, $2$, and $3$ were redistributed in accordance with this ordering, the $\phi$-value ordering among the four former leaves has been preserved. Thus, the ordering with respect to $\phi$-values has been preserved for all vertices that were in the initial double spider. It is now sufficient that we show $\phi(w)$ is in fact smaller than any $\phi$-value belonging to a former leaf. Note that $\phi(w) = 5$. Since every existing label increased by $3$, the $\phi$-value of each former leaf increased by at least $4$. But each such $\phi$-value was originally at least $2$ since the condition $x \geq 2$ ensures that $f(e^\prime) \geq 2$, so $\phi(w)$ will be the minimal $\phi$-value among all vertices.

We finish by considering $n=3$, $m \geq 5$, and $x$ is even.  The two leaves incident to $q$ and $q^{\prime}$ have the two largest $\phi$-values among the four leaves, so we may apply \Href{big-one} to give a pendant edge to each of the vertices. As the two leaves incident to $e^{\prime}$ and $e^{\prime\prime}$  now maximize $\phi$-value among all leaves, we may again apply  \Href{big-one} to add a new edge between these two leaves. Now we may finish by applying \Href{add a path} to add a path of length $m-4$ between the two remaining leaves. As each extension of the initial double spider preserved the property of being strongly antimagic, the final graph is strongly antimagic.

Assume $m, n \geq 4$. We select a neighbor of $v$ that also belongs to the $C_{m}$ and remove the edge that is incident to the neighbor but not to $v$.  If $n \equiv m$ (mod 2) or $n \equiv 0$ (mod 2), we will select a neighbor of $u$ that also belongs to the $C_{n}$ and remove the edge that is incident to the neighbor but not to $u$. If $n$ is odd and $m$ even, we remove the edge between the two vertices on the $C_{n}$ whose distance from $u$ is $\frac{n-1}{2}$. The two vertices become leaves.  Further, if $n \equiv 3$ (mod 4), we remove these two leaves and their pendant edges.
In this way, we obtain a double spider $\SS$. The legs incident to $v$ have lengths $m-2$  and $1$. In the case of $n$ odd and $m$ even, the two legs incident to $u$ both have length $\frac{n-1-2i_{n}}{2}$ where $i_{n}$ is $1$ if $n \equiv 3$ (mod 4) and $0$ otherwise. In the other cases, the legs incident to $u$ have lengths $n-2$ and $1$. We remove the pendant edge incident to $v$ to obtain a single spider
$\S$. The segment of the $C_{m}$ still incident to $v$ is the right leg, and the two legs incident to $u$ are left legs.

Now we apply Labeling A to $\S$ so that $v$ has the maximal $\phi$-value among all vertices of degree 2. Note that pendant edges on left even paths and pendant edges incident to $u$ are labeled after the right pendant edge. Thus, if $n$ is odd and $m$ is even or $n$ is even, the left pendant edges will have labels exceeding the label on the right pendant edge.
On the other hand, if $n$ is odd and $m$ is odd, the right pendant edge will be labeled before any left edges have been labeled, which again ensures that the two pendant edges on the left have labels exceeding that of the right pendant edge. In any case, the two pendant edges on the left have larger labels than the pendant edge on the right. Now we apply \Href{big-one} once to attach a single pendant edge to $v$, which receives the label $1$. At this stage, we have a double spider $\SS$ where the two pendant edges on the left have the largest labels among the four pendant edges. To finish, we first handle the case where $m$ is even and $n \equiv 3$ (mod 4); here, we apply \Href{big-one} to attach two pendant edges, one to each leaf on the left. The two right pendant edges now have the largest labels among all pendant edges, so we again apply \Href{big-one}, this time creating an edge between the two right leaves. Now the two left leaves are the only leaves in the graph, so we may again apply \Href{big-one} to create an edge connecting the two left leaves. For all other cases, we apply \Href{big-one} twice in succession; we connect the left leaves by an edge and then, as the two right leaves are the only vertices of degree $1$ in the resulting graph, connect the two right leaves by an edge.

All the above labelings have $\phi(u) > \phi(v)$. By \href{add a cycle}, adding a cycle to $u$ remains strongly antimagic labeling. We can do this alternatively  between $u$ and $v$, to get the desired cycle double spider. 
\end{proof}


\section{Conclusion and Open Questions}


Once we know how to strongly antimagic label a given graph $G$, \Href{big-one} allows us to bootstrap this knowledge into the ability to create strongly antimagic labelings for many other graphs, namely those graphs that may be obtained from $G$ by adding edges in a way that respects the vertex ordering induced by the strongly antimagic labeling on $G$. For example, the results of this paper established that spiders and double spiders are strongly antimagic, which easily yielded that cycle spiders and cycle double spiders are strongly antimagic. As a further example, the results of \cite{regular} established that regular graphs are antimagic and thus strongly antimagic, so regular graphs may also be extended to obtain many more strongly antimagic graphs. It is therefore an enticing challenge to establish that certain families of graphs are strongly antimagic as such results may in fact verify the strongly antimagic property for many graphs beyond those contained in the family.

We also observe that results on strongly antimagic labeling may shed light on other topics in antimagic graph labeling. For example, recall the notion of {\it k}-shifted antimagic labeling, which was discussed in Section 1. An ongoing line of research 
\cite{Li2, Li3} seeks to determine which values of {\it k} allow for a given graph to be {\it k}-shifted antimagic. As previously noted, showing a graph is strongly antimagic also proves the graph is {\it k}-shifted antimagic for all nonnegative integer {\it k}. Thus, for example, the results in this paper show double spiders are {\it k}-shifted antimagic for any choice of nonnegative integer {\it k}. 

 
 

It is known that all trees with at most  one vertex of degree 2 are antimagic \cite{partition, trees}. 
It remains an open question whether a strongly antimagic version of this result holds. 
\begin{question} 
\label{q1}
Is every tree with at most one vertex of degree 2 strongly antimagic?
\end{question}

The partition method used in \cite{partition, trees} yields the following  partial solution of \Href{q1}:

\begin{corollary}
A tree with at most one vertex of even degree 
is strongly antimagic. \end{corollary}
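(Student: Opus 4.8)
The plan is to run the partition method of \cite{partition, trees} and then verify that, under the stronger hypothesis, the labeling it produces can be made to respect the degree ordering. The first step is to record that the hypothesis is genuinely stronger than the one needed for antimagicness: since a vertex of degree $2$ has even degree, a tree with at most one vertex of even degree has at most one vertex of degree $2$, so the results of \cite{partition, trees} apply and such a tree is already antimagic. What remains is to promote an antimagic labeling to a \emph{strongly} antimagic one, i.e.\ to arrange that $\deg(u) > \deg(v)$ forces $\phi(u) > \phi(v)$.

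To set up the construction, I would root $T$ at its unique vertex of even degree if one exists, and at an arbitrary leaf otherwise, and use the standard bijection $v \mapsto e_v$ between non-root vertices and edges, where $e_v$ is the edge joining $v$ to its parent. Writing $\ell(v) = f(e_v)$ for the label on this edge, every vertex-sum becomes $\phi(v) = \ell(v) + \sum_{c} \ell(c)$, the sum over the children $c$ of $v$ (with the term $\ell(v)$ absent at the root); in particular $\phi(v)$ is a sum of exactly $\deg(v)$ distinct labels. The partition method chooses $\ell$ --- equivalently, partitions the label set $[m]$ along the rooted structure --- so that these sums are pairwise distinct. My plan is to choose the partition in a \emph{sorted} way: process the vertices in order of nondecreasing degree and, within a degree class, feed them the smallest still-available blocks of labels, so that the labels contributing to $\phi(v)$ grow with $\deg(v)$. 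Running the partition method against this sorted target should yield sums that are nondecreasing in degree, with the leaves --- whose sums are single labels --- automatically occupying the bottom.

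The delicate point, and where the parity hypothesis does the real work, is the boundary between consecutive degree classes, where a vertex of degree $d$ assembled from large labels could overtake a vertex of degree $d+1$ assembled from small ones. Here I would use the parity invariant that the parity of a sum of labels is governed by how many of them are odd, together with the fact that all but at most one degree is odd, to show that sums in different degree classes cannot collide and, more importantly, cannot invert: the ``almost all odd'' degree pattern is exactly what forces the prescribed increasing-sum partition of $[m]$ to exist and the across-class comparisons to run the right way --- the step that fails for general even degrees and that keeps \Href{q1} open. I expect this parity bookkeeping to be the main obstacle: one must check that the sorted partition is realizable on the tree even though each leaf edge is shared between a leaf and an internal vertex, and dispose of the few boundary trees (notably the single even-degree vertex and the case where the root itself is that vertex). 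Once this is in place, \Href{big-one} can absorb any remaining pendant adjustments, and the labeling is strongly antimagic.
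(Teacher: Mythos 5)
Your setup is on target --- the reduction to ``at most one vertex of degree $2$,'' the bijection $v \mapsto e_v$, and especially the choice of root (the unique even-degree vertex if it exists, a leaf otherwise) are exactly the right frame --- but the engine of your argument, the step that actually forces $\phi$ to respect degrees, is wrong in two ways. First, ``feed the smallest still-available blocks of labels to the smallest-degree vertices'' is incompatible with the partition method you are invoking. That method requires each children-block to have a \emph{prescribed} sum (zero in the relevant modulus), because $\phi(v) = \ell(v) + \sum_{c}\ell(c)$ contains one label, $\ell(v)$, coming from the parent's block; only when every internal vertex's children-contribution is pinned to a controlled value do the $\phi$-values inherit distinctness from the distinctness of the $\ell(v)$'s. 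A block made of the smallest available labels has an uncontrolled sum, so your sorted greedy destroys exactly the distinctness the method was providing, and it does not sort the sums either, since the parent label $\ell(v)$ is not sorted along with $v$'s block. Second, parity of the vertex sums is not, and cannot be, what blocks inversions across degree classes: integers of different parities interleave freely, and nothing in the construction controls how many odd labels meet a given vertex.

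What the hypothesis actually buys is a magnitude argument, not a parity one. Every non-root internal vertex $v$ has odd degree, hence an \emph{even} number $\deg(v)-1 \geq 2$ of children; so the label set $[m]$ can be split into pairs of constant sum $S$ (pairs $\{j, m-j\}$ plus the singleton $\{m\}$ when $m$ is odd, i.e.\ in your leaf-rooted case; pairs $\{j, m+1-j\}$ when $m$ is even, i.e.\ when rooted at the even-degree vertex), and each children-block can be taken to consist of exactly $\tfrac{1}{2}(\deg(v)-1)$ such pairs. This forces $\phi(v) = \ell(v) + \frac{\deg(v)-1}{2}\,S$, so vertices of equal degree get distinct sums (their parent labels differ), and every vertex of odd degree $d$ lands in the window $\left(\frac{d-1}{2}S, \frac{d+1}{2}S\right]$. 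The hypothesis is then used a second time: distinct odd degrees differ by at least $2$, so these windows are pairwise disjoint and increase with $d$; the root is the only exception, and it is harmless --- if it is the unique even-degree vertex it has no parent edge and its sum $\frac{\deg}{2}S$ falls exactly on the boundary between the windows for degrees $\deg-1$ and $\deg+1$, while if it is a leaf its sum is $m$, below every internal window. (For arbitrary trees, consecutive degrees $d$ and $d+1$ give overlapping windows, which is why this argument says nothing about \Href{q1}.) Without this pairing/window mechanism your outline has no working engine, so as written the proof does not go through.
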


A {\it caterpillar} is a tree in which all the vertices are within distance 1 of a central path. It is known that caterpillars are antimagic \cite{caterpillar1, caterpillars, caterpillar all}. The following question remains open: 

\begin{question}
\label{caterpillar}
Is every caterpillar strongly antimagic? 
\end{question}

\noindent
A caterpillar is called {\it regular}  if all non-leaf vertices have the same degree. By  \Href{single} and \Href{path}, one can show that 
every regular caterpillar is  strongly antimagic.  
Moreover, according to  \Href{big-one}, investigating    \Href{caterpillar} bounds to studying the caterpillars $G$ where $V_i(G) \neq \emptyset$ for all $1 \leq i \leq \Delta(G)$. 

In \Href{level-wise regular trees}, we have shown that some level-wise regular trees are strongly antimagic. It is interesting to investigate the following question which is weaker than \Href{q1}:  

\begin{question}
Is every level-wise regular tree strongly antimagic? 
\end{question}


Every strongly antimagic graph is antimagic.  On the other hand, there exist disconnected graphs that are antimagic but not strongly antimagic (cf. \cite{Parker}).  It is natural to ask whether the converse holds for connected graphs.

\begin{question} 
Is every connected antimagic graph strongly antimagic? 
\end{question}






\begin{thebibliography}{999}
\small

\bibitem{dense}
N. Alon, G. Kaplan, A. Lev, Y. Roditty, and R. Yuster, 
Dense graphs are antimagic, 
J. Graph Theory, 
47 (2004), 
297--309. 

\bibitem{regular2}
K. B\'erezi, A. Bern\'ath, and M. Vizer, 
Regular graphs are antimagic, 
Electric Journal of Combinatorics, 22 (2015), paper P3.34. 

\bibitem{shifted} 
F.-H.~Chang, H.-B.~Chen, W.-T.~Li, and Z.~Pan, Shifted-antimagic labelings for graphs, Graphs and Combinatorics, 37 (2021), 1065--1182.  

\bibitem{strongly} F.-H. Chang, P. Chin, W.-T. Li and Z. Pan, 
The strongly antimagic labelings of double spiders, 
Indian J. Discrete Math. 6 (2020), 43--68. 

\bibitem{regular} F.-H. Chang, Y.-C. Liang, Z. Pan, X. Zhu, Antimagic labeling of regular graphs, J. Graph Theory 82 (2016), 339--349. 

\bibitem{regular bipartite} D. W. Craston, Regular bipartite graphs are antimagic, J. Graph Theory, 60 (2009), 179--182.

\bibitem{caterpillars}
K. Deng, Y. Li,  
Caterpillars with maximum degree 3 are antimagic, 
Discrete Math., 
342 (2019), 1799--1801. 

\bibitem{Li2} 
K. D. E. Dhanajaya, and W.-T. Li, Antimagic labeling of forests with sets of consecutive integers, Discrete Applied Math., 309 (2022), 75--84.


\bibitem{Ringel} N. Hartsfield and G. Ringel, Pearls in Graph Theory, Academic Press, INC, Boston, 1990, pp. 108-109, Revised version 1994.

\bibitem{Huang} T.-Y. Huang, Antimagic labeling on spiders, Master's Thesis, Department of Mathematics, National Taiwan University, 2015.

\bibitem{partition}
G. Kaplan, A. Lev, Y. Roditty, 
On zero-sum partitions and antimagic trees, Discrete Math., 
309 (2009), 
2010--2014. 

\bibitem{Parker}
P. Le, Strongly antimagic graphs, Master's Thesis, California State University Los Angeles, May 2022.

\bibitem{Li3} W.-T Li and Y.-S. Wang, Labeling trees of small diameters with consecutive integers, manuscript, 2022. 

\bibitem{trees} Y. Liang, T. Wong and X. Zhu, Antimagic labeling of trees, Discrete Math., 331 (2014), 9--14.


\bibitem{caterpillar1}
A. Lozano, M. Mora, and C. Seara, 
Antimatic labeling of caterpillars, 
Applied Math and Computation, 
347 (2019), 734--740. 

\bibitem{caterpillar all} 
A. Lozano, M. Mora, C. Seara, and Joqu\'in Tey, Caterpillars are antimagic,  
arXiv:1812.06715v2.


\bibitem{Shang} J.-L. Shang, Spiders are antimagic, Ars Combinatoria, 118 (2015), 367--372. 

\bibitem{weight} T. Wong and X. Zhu, Antimagic labeling of vertex weighted graphs, J. Graph Theory, 70 (2012), 348--350. 


\end{thebibliography}
\end{document}